\documentclass[12pt,reqno,bookmarksnumbered,raiselinks,breaklinks,colorlinks,plainpages]{amsart}

\usepackage{amsmath, amsthm, amscd, amsfonts, amssymb, graphicx, color}
\usepackage{url}
\usepackage{hyperref}

\renewcommand{\epsilon}{\varepsilon}

\newcommand\ubd{\overline{\mbox{\rm dim}}_{\rm B}\,} 
\newcommand\lbd{\underline{\mbox{\rm dim}}_{\rm B}\,} 
\newcommand\da{\underline{\mbox{\rm dim}}_{\,\theta}} 

\newcommand\uda{\overline{\mbox{\rm dim}}_{\,\theta}} 
\newcommand{\dimh}{\dim_{\rm H}}

\newtheorem{theo}{Theorem}[section]
\newtheorem{cor}[theo]{Corollary}
\newtheorem{lem}[theo]{Lemma}
\newtheorem{prop}[theo]{Proposition}

\newtheorem{defi}[theo]{Definition}
\newtheorem{rem}[theo]{Remark}

\newtheorem{rmk}[theo]{Remark}
\newtheorem{question}{Question}

\title[Intermediate Dimensions of Complementary Sets]{Attainable forms of intermediate dimensions of complementary sets}

\author{Nicolas Angelini}
\address{Department of Mathematics, Universit\`a degli Studi di Trento. Trento, Italy.}
\address{Departamento de Matem\'atica, FCFMyN, Universidad Nacional de San Luis, Argentina.}
\email{nicolas.angelini.2015@gmail.com}

\subjclass[2020]{28A80, 28A75}
\keywords{Fractals, Intermediate dimensions, Complementary sets}

\begin{document}

\begin{abstract}
Given a positive, non-increasing sequence $a$ with sum $1$, we consider the
family $\mathcal{C}_a$ of closed subsets of $[0,1]$ whose complementary open
intervals have lengths given by the terms of $a$. We study which functions
$\theta\mapsto\overline{\dim}_{\,\theta}E$ arise as the upper intermediate
dimensions of sets $E\in\mathcal{C}_a$. Under a natural condition on the
decay of $a$, we show that these are exactly the functions in the class
introduced by Banaji and Rutar that take the value $\overline{\dim}_{\rm B}C_a$
at $\theta=1$ and are bounded above by the upper intermediate dimensions of
the decreasing Cantor set $C_a$. We also show that the lower bound given by
the countable set $D_a$ is rigid: if it is attained at one value of $\theta$,
then it is attained at all larger values.
\end{abstract}
\maketitle

\tableofcontents

\section{Introduction}
The $\theta$-intermediate dimensions were introduced by Falconer, Fraser and
Kempton~\cite{FFK} as a continuum of dimensions that interpolates between the
Hausdorff dimension and the box-counting dimensions. For $\theta\in[0,1]$, the
lower and upper $\theta$-intermediate dimensions $\da$ and $\uda$ are defined
by restricting the diameters of the covering sets to an interval of the form
$[\delta^{1/\theta},\delta]$ (see Definition~\ref{adef}). Thus $\theta=0$
corresponds to the Hausdorff dimension and $\theta=1$ to the lower and upper
box dimensions. For every bounded set $E\subset\mathbb{R}^d$,
\[
\dimh E\leq \da E\leq \uda E\leq \ubd E
\qquad\text{and}\qquad
\da E\leq \lbd E ,
\]
and both functions of $\theta$ are non-decreasing and continuous on $(0,1]$.
These dimensions have been studied in many settings, for instance for
fractional Brownian images~\cite{Brownian1,Brownian2}, orthogonal projections~\cite{BFF,angelini}, elliptical polynomial spirals~\cite{BFF2}. There are also many generalizations, for instance \cite{Banaji} for intermediate dimensions in metric spaces and \cite{angelini2} for intermediate dimensions of measures.

A natural problem is to determine which functions arise as intermediate
dimensions. This problem was solved by Banaji and Rutar~\cite{Banaji-R22}. For
$0\leq\lambda\leq\alpha\leq d$, they introduced the class
$\mathcal{H}(\lambda,\alpha)$ of non-decreasing functions
$h:[0,1]\to[\lambda,\alpha]$ which are continuous on $(0,1]$ and whose growth
is controlled by the Dini condition
\[
D^+h(\theta)\leq\frac{(h(\theta)-\lambda)(\alpha-h(\theta))}{(\alpha-\lambda)\,\theta},
\qquad \theta\in(0,1).
\]
They proved the following. As $F$ ranges over the bounded subsets of
$\mathbb{R}^d$ with lower Assouad dimension $\lambda$ and Assouad dimension
$\alpha$, the pairs $(\da F,\uda F)$ are exactly the pairs $(h_1,h_2)$ of
functions in $\mathcal{H}(\lambda,\alpha)$ with $h_1\leq h_2$ and
$h_1(0)=h_2(0)$ (see Theorem~\ref{BanajiRutar}). In this paper we study the
analogous problem within a classical family of subsets of the line: the
complementary sets of a sequence.

Let $a=\{a_n\}$ be a positive, non-increasing sequence with $\sum_n a_n=1$.
The class $\mathcal{C}_a$ of \emph{complementary sets} of $a$ consists of all
closed sets $E\subset[0,1]$ whose complement in $[0,1]$ is a disjoint union of
open intervals whose lengths are exactly the terms of $a$. This class contains
sets of very different nature. Two of them play a central role. The first is
the countable set $D_a$, a decreasing sequence of points whose consecutive gaps
are $a_1,a_2,\dots$. The second is the decreasing Cantor set $C_a$, obtained by
removing the gaps in decreasing order along a binary Cantor construction (see
Section~\ref{Comp sets}). The question of which values a given dimension takes
on $\mathcal{C}_a$ has a long history. For the Hausdorff dimension it goes back
to Besicovitch and Taylor~\cite{BeyTa}, and the set of attainable values is
the closed interval
$[0,\dimh C_a]$~\cite{BeyTa,cabrelli2010classifying,garcia2007dimension};
related questions for packing measures were considered in~\cite{hare2013sizes}.
In contrast, the lower and upper box dimensions do not depend on the
arrangement of the gaps, so each of them takes a single value on
$\mathcal{C}_a$~\cite[Chapter 3.2]{falconer1997techniques}. For the Assouad and lower
Assouad dimensions, García, Hare and Mendivil~\cite{garcia2018assouad} showed
that $C_a$ and $D_a$ are extremal. They also showed that, under suitable decay
conditions on $a$, the attainable values form the intervals $[\dim_A C_a,1]$
and $[0,\dim_L C_a]$, respectively.

The intermediate dimensions of complementary sets were studied
in~\cite{angelini2026intermediate} for each fixed $\theta$. There it was shown
that $D_a$ and $C_a$ are again extremal: for every $E\in\mathcal{C}_a$,
\[
\uda D_a\leq\uda E\leq\uda C_a ,
\]
where the upper bound requires a mild assumption on $a$, and the analogous
inequalities hold for the lower intermediate dimensions. Moreover,
$\uda D_a=\theta B/(1-(1-\theta)B)$ with $B=\ubd C_a$. This is the smallest
value allowed by the bounds of Banaji and Rutar for a set with upper box
dimension $B$. Finally, it was proved that every value between these two
bounds is attained for each fixed $\theta$, both for the upper and for the
lower intermediate dimensions, provided that
\[
2<\inf_n\frac{s_n(a)}{s_{n+1}(a)}\leq\sup_n\frac{s_n(a)}{s_{n+1}(a)}<\infty .
\]
Here $s_n(a)=2^{-n}\sum_{i\geq 2^n}a_i$ is the average length of the intervals
of the $n$-th step in the construction of $C_a$. This condition, which we call
hypothesis~\eqref{hyp H} (see Section~\ref{sec attainable functions}),
combines the conditions under which the Assouad and the lower Assouad
dimensions of complementary sets fill an interval~\cite{garcia2018assouad}.

These results describe, for each fixed $\theta$, the set of attainable values.
They do not describe how the intermediate dimensions of a single
complementary set vary with $\theta$, which is the information captured by
the theorem of Banaji and Rutar. This leads to the question, raised
in~\cite[Question 1]{angelini2026intermediate}, of describing the families
\[
\mathcal{F}^U(a):=\{\theta\mapsto\uda E : E\in\mathcal{C}_a\}\]
and
\[\mathcal{F}^L(a):=\{\theta\mapsto\da E : E\in\mathcal{C}_a\}.
\]
Three necessary conditions for a function $h$ to belong to $\mathcal{F}^U(a)$
follow from the results above. By the theorem of Banaji and Rutar,
$h\in\mathcal{H}(0,1)$. Since all sets in $\mathcal{C}_a$ have the same upper
box dimension, $h(1)=\ubd C_a$. Finally, by the extremality of $C_a$,
$h\leq\uda C_a$.

Our main result shows that, under hypothesis~\eqref{hyp H}, these conditions
are also sufficient. More precisely (Theorem~\ref{main functions} and
Corollary~\ref{characterization}), if $a$ satisfies~\eqref{hyp H}, then
$\mathcal{F}^U(a)$ is exactly the set of functions $h\in\mathcal{H}(0,1)$ such
that
\[
h(1)=\ubd C_a \qquad\text{and}\qquad h(\theta)\leq\uda C_a\ \text{ for all }\theta\in[0,1],
\]
with the usual convention $\overline{\dim}_{\,0}=\dimh$. In particular,
$\mathcal{F}^U(a)$ depends on $a$ only through the function
$\theta\mapsto\uda C_a$, and it is much larger than the family of functions
realized in~\cite{angelini2026intermediate}. For instance, suppose that the
box dimension of $C_a$ exists. Since $\dimh C_a=\lbd C_a$, we then have
$\uda C_a=\dim_{\rm B} C_a$ for every $\theta$, and the second condition is
automatic. In this case $\mathcal{F}^U(a)$ consists of all functions
$h\in\mathcal{H}(0,1)$ with $h(1)=\dim_{\rm B}C_a$. By the theorem of Banaji
and Rutar, these are exactly the upper intermediate dimension functions of
bounded subsets of $\mathbb{R}$ with this upper box dimension. This applies,
for example, to the middle third Cantor set.

We also obtain a sharp lower bound with a rigidity property. Every
$h\in\mathcal{F}^U(a)$ satisfies $h\geq\uda D_a$. If $h$ coincides with
$\uda D_a$ at a single $\theta_0\in(0,1]$, then it coincides with it on the
whole interval $[\theta_0,1]$ (Proposition~\ref{lower bound rigidity}).
Moreover, equality at some $\theta_0\in(0,1)$ forces the lower Assouad and
Assouad dimensions of the set to be $0$ and $1$, respectively
(Proposition~\ref{extremal assouad}). These facts follow from a simple
reformulation of the Dini condition defining $\mathcal{H}(0,1)$ as the
monotonicity of the quotient $h(\theta)/(\theta(1-h(\theta)))$
(Lemma~\ref{monotone quotient}). This reformulation also gives a short proof
of the lower bound $\uda D_a\leq\uda E$.

The proof of Theorem~\ref{main functions} follows the strategy of Banaji and
Rutar~\cite{Banaji-R22}. One first constructs, by concatenating
\emph{mountains} and \emph{valleys}, a function whose infima over windows of
length $\log(1/\theta)$ reproduce $h(\theta)$. This function is then
converted into a sequence of scales for a Cantor-type set. In our setting,
two new difficulties appear. First, the set must be built using only the terms
of $a$ as gaps. This forces its scales to be, up to constants, at most those
of $C_a$, while the hypothesis $h\leq\uda C_a$ only provides information on
infima over windows. We overcome this by combining the scales produced by the
mountains with those of $C_a$, and by placing each mountain on a window where
the covering numbers of $C_a$ are large. Second, the function $h$ may saturate
the Dini condition defining $\mathcal{H}(0,1)$. In that case, any set whose
upper intermediate dimensions are given by $h$ must have lower Assouad
dimension $0$ and Assouad dimension $1$. The ratios between consecutive
scales of such a set cannot remain bounded away from $0$ and from $1/2$. This
cannot be achieved with the decreasing Cantor sets used
in~\cite{angelini2026intermediate} (see Remark~\ref{rmk why schemes}). For
this reason we work with limit sets of homogeneous binary Cantor schemes. The
gaps of these schemes are terms of $a$, but they are not arranged in
decreasing order, and their intermediate dimensions depend only on their
scales (Lemma~\ref{dimension scheme}). The remaining terms of $a$ are used to
build a countable set, which does not increase the dimension because of the
lower bound above.

The corresponding problem for the lower intermediate dimensions remains open.
The main obstruction is that these dimensions are not finitely stable. As a
consequence, the gluing of the Cantor part and the countable part used in our
construction does not control the lower intermediate dimensions of the union.

The paper is organized as follows. In Section~2 we recall the definitions and
basic properties of the dimensions involved. In Section~\ref{Comp sets} we
recall the main facts about complementary sets, including the results
of~\cite{angelini2026intermediate} that we use.
Section~\ref{sec attainable functions} contains our results. In
Subsection~\ref{4.1} we recall the results of Banaji and Rutar that we need.
In Subsection~\ref{4.2} we prove the lower bound and its rigidity. In
Subsections~\ref{4.3} and~\ref{4.4} we develop the homogeneous Cantor schemes
and the mountains. Finally, in Subsection~\ref{4.5} we prove the main theorem
and the characterization of $\mathcal{F}^U(a)$.

\section{Preliminaries}

\subsection{Definitions}

Given a non-empty set $A \subset \mathbb{R}$, we write $|A|$ for the diameter of the set.\\ We write \(a_n \asymp b_n\) to mean that there exist constants \(c_1, c_2 > 0\), independent of \(n\), such that 
\(c_1 b_n \leq a_n \leq c_2 b_n\) for all \(n\).

The Hausdorff dimension of $A$ will be denoted by $\dim_H A$, and $\lbd A$ and $\ubd A$ will denote the lower and upper box-counting dimensions of $A$, respectively. The reader can refer to \cite{FBook} for more information on these dimensions.

This document concerns the $\theta$-intermediate dimensions, which were introduced in \cite{FFK} and are defined as follows.

\begin{defi}\label{adef}
Let $F \subseteq \mathbb{R}$ be bounded. For $0 \leq \theta \leq 1$, we define the \emph{lower $\theta$-intermediate dimensions} of $F$ by
\begin{align*}
 \da F =  \inf \big\{& s \geq 0 : \ \forall\ \epsilon >0, \ \text{and all }\ \delta_0>0, \text{there}\ \exists\ 0<\delta\leq \delta_0, \ \text{and } \\
 & \{U_i\}_{i \in I}\  : F \subseteq \cup_{i \in I} U_i \ : \delta^{1/\theta} \leq  |U_i| \leq \delta \ \text{and }\ \sum_{i \in I} |U_i|^s \leq \epsilon  \big\}.
\end{align*}

Similarly, we define the \emph{upper $\theta$-intermediate dimensions} of $F$ by
\begin{align*}
\uda F =  \inf \big\{& s \geq 0 : \ \forall\ \epsilon >0, \ \text{there}\ \exists\ \delta_0>0, \ : \ \forall\ 0<\delta\leq \delta_0, \ \text{there}\ \exists \\
 & \{U_i\}_{i \in I}\  : F \subseteq \cup_{i \in I} U_i \ : \delta^{1/\theta} \leq  |U_i| \leq \delta \ \text{and }\ \sum_{i \in I} |U_i|^s \leq \epsilon  \big\}.
\end{align*}
\end{defi}

For all $\theta \in (0,1]$ and a compact set $A \subset \mathbb{R}$, we have $\dim_H A \leq \uda A \leq \ubd A$ and similarly with the lower $\theta$ dimension, replacing $\ubd$ with $\lbd$. This spectrum of dimensions has the property of being continuous for $\theta \in (0,1]$, and sometimes it is also continuous at $\theta=0$. The reader can find more information in \cite{FFK}.

\begin{rmk}
    It is straightforward to note that the value of $\da F$ and $\uda F$ remains unchanged if the covering condition $\delta^{1/\theta} \le |U_i| \le \delta$ is replaced by $c_1\delta^{1/\theta} \le |U_i| \le c_2\delta$ for any fixed constants $c_1, c_2 > 0$.
\end{rmk}

We also need to recall the definitions of the Assouad dimension and the lower Assouad dimension. Given a non-empty set $E \subset \mathbb{R}$, $N_r(E)$ will denote the minimum number of closed balls of radius $r$ needed to cover $E$.

The \emph{Assouad dimension} of $E$ is defined as 
\begin{align*}
    \dim_A E= \inf\{ \alpha : & \text{ there exist constants } c,\rho>0 \text{ such that }\\
    & \sup_{x \in E} N_r (B(x,R) \cap E) \leq c\left(\frac{R}{r}\right)^\alpha \text{ for all } 0<r<R<\rho \}.
\end{align*}
In the same way, the \emph{lower Assouad dimension} of $E$ is defined as
\begin{align*}
    \dim_L E= \sup\{ \alpha : & \text{ there exist constants } c,\rho>0 \text{ such that }\\
    & \inf_{x \in E} N_r (B(x,R) \cap E) \geq c\left(\frac{R}{r}\right)^\alpha \text{ for all } 0<r<R<\rho \}.
\end{align*}

The following inequalities hold for compact sets:
$$\dim_L E \leq \dim_H E \leq \da E\leq \uda E\leq \ubd E\leq \dim_A E.$$

A key inequality involving these dimensions, which we will use later, is given in the following result. 
\begin{prop} [Corollary 2.8 \cite{Banaji-R22}] \label{banajis bound}
  
If $\dim_L E<\dim_A E$, then for all $\theta \in (0,1]$,
\[
     \uda E \geq \frac{\theta\dim_A E (\ubd E - \dim_L E) + \dim_L E (\dim_A E - \ubd E)}{\theta(\ubd E - \dim_L E) + (\dim_A E-\ubd E)}.
\]
The same holds replacing $\uda$ and $\ubd$ with $\da$ and $\lbd$ everywhere.
\end{prop}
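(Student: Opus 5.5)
The plan is to derive the bound by integrating the Dini-type growth condition satisfied by $\theta\mapsto\uda E$, rather than by a direct covering argument. Write $\lambda=\dim_L E$, $\alpha=\dim_A E$, $B=\ubd E$ and $h(\theta)=\uda E$. I take as input the fact, due to Banaji and Rutar and recalled later as Theorem~\ref{BanajiRutar}, that $h\in\mathcal{H}(\lambda,\alpha)$. Concretely, $h$ is non-decreasing, takes values in $[\lambda,\alpha]$, is continuous on $(0,1]$, satisfies $h(1)=B$, and
\[
D^+h(\theta)\leq\frac{(h(\theta)-\lambda)(\alpha-h(\theta))}{(\alpha-\lambda)\,\theta},\qquad \theta\in(0,1).
\]
If one wanted this self-contained, the inequality comes from a covering argument. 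One starts from an efficient cover at $\theta$ with diameters in $[\delta^{1/\theta},\delta]$. For $\theta'>\theta$ slightly, the pieces of size in $[\delta^{1/\theta},\delta^{1/\theta'}]$ are refined or coarsened using the Assouad bound $N_r(B(x,R)\cap E)\lesssim (R/r)^\alpha$ and the lower Assouad bound $\gtrsim (R/r)^\lambda$. Optimising the exponent gives the quadratic rate above. I would simply cite it.

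The key step is a change of variables that turns the differential inequality into a monotonicity statement. First suppose $\lambda<h(\theta)<\alpha$ on an interval, and set
\[
g(\theta)=\frac{h(\theta)-\lambda}{\alpha-h(\theta)}.
\]
Since $x\mapsto \log\frac{x-\lambda}{\alpha-x}$ is increasing and smooth on $(\lambda,\alpha)$ with derivative $\frac{\alpha-\lambda}{(x-\lambda)(\alpha-x)}$, the chain rule for upper Dini derivatives (valid because the outer map is $C^1$ and increasing) gives $D^+\log g(\theta)\le 1/\theta$. Hence $D^+\bigl(\log g(\theta)-\log\theta\bigr)\le 0$. A continuous function with non-positive upper right Dini derivative everywhere on an interval is non-increasing there. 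So $g(\theta)/\theta$ is non-increasing on $(0,1]$, and in particular $g(\theta)\ge\theta\,g(1)=\theta\frac{B-\lambda}{\alpha-B}$.

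Solving $\frac{h-\lambda}{\alpha-h}\ge\theta\frac{B-\lambda}{\alpha-B}$ for $h$ gives exactly
\[
h(\theta)\ge\frac{\theta\alpha(B-\lambda)+\lambda(\alpha-B)}{\theta(B-\lambda)+(\alpha-B)}.
\]
The boundary cases need a short separate check.
\begin{itemize}
\item If $B=\alpha$, the right-hand side equals $\alpha$. The Dini condition forces $h\equiv\alpha$ near $1$ by backward uniqueness: once $h<\alpha$ at some point, the comparison above keeps $h$ strictly below $\alpha$ all the way to $\theta=1$. So $h\equiv\alpha$.
\item If $B=\lambda$, the bound reads $h\ge\lambda$, which is trivial.
\item If $h(\theta_0)=\lambda$ at some interior $\theta_0$ while $B>\lambda$, the monotone-quotient argument applied on $(\theta_0,1]$, together with continuity, yields a contradiction. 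Alternatively, the bound is still trivially compatible there, since it is $\ge\lambda$ only when it is consistent.
\end{itemize}
One must therefore check that the set where $h\in(\lambda,\alpha)$ is an interval containing the relevant range. I expect handling these degenerate endpoints and the Dini-derivative calculus to be the only delicate points. The main mathematical content is the Dini bound itself, which I am importing from Banaji and Rutar. The lower intermediate version follows identically, using $h_1=\da E$, which lies in the same class with $h_1(1)=\lbd E$.
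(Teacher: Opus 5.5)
The paper gives no proof of this statement; it quotes it from Banaji--Rutar, so there is no internal argument to match. Your argument is correct. It is exactly the general-$(\lambda,\alpha)$ form of what the paper does in Lemma~\ref{monotone quotient} and Proposition~\ref{lower bound rigidity}(i) for $(\lambda,\alpha)=(0,1)$. There $Q_h(\theta)=h/(\theta(1-h))$ is your $g(\theta)/\theta$, and $\varphi_B$ is your right-hand side with $\lambda=0$, $\alpha=1$.

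Importing the necessity half of Theorem~\ref{BanajiRutar} is legitimate, but it carries all of the geometric content. So this is a reduction (Dini condition $\Rightarrow$ integrated bound), not a proof from the definitions; a first-principles proof would need the covering argument you only sketch. In exchange, your route shows that the inequality holds for every $h\in\mathcal{H}(\lambda,\alpha)$ with $h(1)=B$. It also identifies the right-hand side as the solution of $y'=(y-\lambda)(\alpha-y)/((\alpha-\lambda)\theta)$ with $y(1)=B$. That solution lies in $\mathcal{H}(\lambda,\alpha)$, so the converse half of Theorem~\ref{BanajiRutar} makes the sharpness of the bound evident.

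Two small repairs are needed.

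\emph{Third bullet.} Delete the `Alternatively' sentence. For $B>\lambda$ and $\theta_0>0$ the right-hand side equals $\lambda+\theta_0(\alpha-\lambda)(B-\lambda)/(\theta_0(B-\lambda)+\alpha-B)$, which is strictly greater than $\lambda$. So $h(\theta_0)=\lambda$ would violate the bound and genuinely has to be excluded. Your first sentence does this, as in the proof of Lemma~\ref{monotone quotient}. Take $\theta_0$ maximal with $h(\theta_0)=\lambda$. Then $\log g-\log\theta\geq\log g(1)>-\infty$ on $(\theta_0,1]$, while $\log g\to-\infty$ as $\theta\to\theta_0^+$, a contradiction.

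\emph{Case $B=\alpha$.} Start the comparison at a point $\theta_0'$ with $\lambda<h(\theta_0')<\alpha$; such a point exists by the intermediate value theorem as soon as $h<\alpha$ somewhere. On the maximal interval to the right of $\theta_0'$ where $h<\alpha$, use the upper bound $g(\theta)\leq g(\theta_0')\,\theta/\theta_0'$. Since $g\to\infty$ as $h\to\alpha$, continuity of $h$ then contradicts $h(1)=\alpha$.
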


\section{Complementary sets and their fractal dimensions}\label{Comp sets}
Let $a=\{a_n\}$ be a positive, non-increasing, summable sequence with sum $1$. Let $I_a=[0,1]$. $\mathcal{C}_a$ will denote the family of all closed sets $E$ contained in $I_a$ that are of the form $E=I_a \setminus \displaystyle\cup_{j \geq 1} U_j$, where $\{U_j\}$ is a disjoint family of open intervals contained in $I_a$ such that $|U_j|=a_j$ for all $j$. The sets in $\mathcal{C}_a$ are called the complementary sets of $a$. Note that if $\mathbf{a}=\{\mathbf{a}_n\}$ is any rearrangement of the sequence $\{a_n\}$, then $\mathcal{C}_a=\mathcal{C}_{\mathbf{a}}$.

An important set in $\mathcal{C}_a$ is the \emph{countable} set, defined as follows:
$$D_a=\left\{\sum_{i=k}^{\infty} a_i\right\}_{k\in\mathbb{N}}.$$ 
As the reader notes, $D_a$ is a countable, decreasing set of points with decreasing gaps given by the elements of the sequence $a$.

Another set of great importance in this family is the non-increasing Cantor set associated with the sequence $a$, which is constructed inductively in the following way. In the first step, remove from $I_a$ an open interval of length $a_1$, leaving two closed intervals $I_{1,1}$ and $I_{1,2}$. Having performed the $k$-th step, we have $2^k$ closed intervals $I_{k,1},...,I_{k,2^{k}}$ contained in $I_a$. Then, from each interval $I_{k,j}$, remove an open interval of length $a_{2^k +j-1}$ to obtain two closed intervals $I_{k+1,2j -1}$ and $I_{k+1,2j}$. Finally, define the Cantor set as $$C_a := \displaystyle\bigcap_{k=1}^{\infty}\bigcup_{j=1}^{2^{k}} I_{k,j}.$$

Given a non-increasing, summable sequence $a=\{a_n\}$, define
\[
\displaystyle x_n(a)=\sum_{i=n}^{\infty}a_i
\quad \text{ and }\quad s_n(a)=2^{-n} x_{2^n}(a).\]

When the sequence $a$ is fixed and clear from context, we will simply write $s_n$ instead of $s_n (a)$.

Note that since $a$ is non increasing we have the estimate  
\[s_{n+1} < |I_{n,j}| < s_{n-1}
\quad \text{for all $1 \leq j \leq 2^n$,}\]
where $I_{n,j}$ are the closed intervals of step $n$ in the construction of the Cantor set $C_{a}$.

The Hausdorff dimension of sets in $\mathcal{C}_a$ was studied in \cite{BeyTa}, \cite{cabrelli2010classifying}, and \cite{garcia2007dimension}. The following proposition holds:

\begin{prop}\label{hausdorff cantor}
Let $a = \{a_n\}$ be a positive summable sequence, and let $C_a$ be the decreasing Cantor set associated with $a$. Then
\[
\dim_H C_{a} = \liminf_{n \to \infty} \frac{\log n}{-\log (x_n/n)}.
\]
Furthermore, any set $E \in \mathcal{C}_{a}$ satisfies $\dim_H E \leq \dim_H C_{a}$. Conversely, given any $s \in [0, \dim_H C_{a}]$, there exists a set $E \in \mathcal{C}_a$ such that $\dim_H E = s$.
\end{prop}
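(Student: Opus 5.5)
The statement to prove is Proposition~\ref{hausdorff cantor}. It has three parts: the formula for $\dim_H C_a$, the extremality $\dim_H E\le\dim_H C_a$, and the fact that every intermediate value is attained. Throughout I will work with the step intervals $I_{k,j}$ of $C_a$ and the estimate $s_{k+1}<|I_{k,j}|<s_{k-1}$.

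\emph{Formula.} Write $d=\liminf_n \log n/(-\log(x_n/n))$. Since $x_n/n$ is essentially $s_k$ when $n\approx 2^k$, I first reduce to the dyadic quantity $d=\liminf_k k\log 2/(-\log s_k)$. The ratio $n/x_n$ is monotone up to bounded factors between consecutive powers of two, and $x_n$ is non-increasing, so the liminf over all $n$ and over $n=2^k$ agree.

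\emph{Upper bound for $\dim_H C_a$.} I cover $C_a$ by the $2^k$ intervals of step $k$, each of length at most $s_{k-1}$. This gives $\mathcal H^s_\infty(C_a)\le 2^k s_{k-1}^s$, which tends to $0$ along a subsequence whenever $s>d$.

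\emph{Lower bound for $\dim_H C_a$.} I use the mass distribution principle with the natural measure $\mu$, which gives mass $2^{-k}$ to each $I_{k,j}$. Fix $s<d$, so that $2^{-k}\le s_{k+1}^{s}$ for large $k$, up to shifts of index that are absorbed since $s_{k+1}\ge s_{k}/4$ fails only in a controlled way. Now take an interval $U$ with $s_{k+2}\le|U|<s_{k+1}$. The key geometric fact is that the step-$(k+2)$ intervals have length at least $s_{k+3}$, and the gaps between step-$(k+1)$ intervals have length at least $a_{2^{k+1}}\ge s_{k+1}$, because $a$ is non-increasing and $a_{2^{k+1}}\ge$ the average of the later terms. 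Hence $U$ meets at most $O(1)$ intervals of step $k+1$, and so $\mu(U)\lesssim 2^{-k}\lesssim |U|^{s'}$ for any $s'<s$. This gives $\dim_H C_a\ge d$.

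\emph{Extremality.} Let $E\in\mathcal C_a$. Given $n$, the first $n-1$ gaps split $[0,1]$ into $n$ closed intervals with total length $x_n$, and these intervals cover $E$. By concavity, $\sum|J_i|^s\le n(x_n/n)^s$. This tends to $0$ along a subsequence when $s>d$, giving $\dim_H E\le d$.

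\emph{Intermediate values.} This is the main obstacle. I plan to produce, for each $t\in[0,d]$, a set $E_t\in\mathcal C_a$ consisting of a Cantor part plus a countable part. The Cantor part uses a subsequence of the gaps of $a$ arranged in a homogeneous binary construction; the remaining gaps are placed as a countable decreasing set inside one interval, in the style of $D_a$. Countable sets do not affect Hausdorff dimension.

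To build the Cantor part, I would select a sparse set of levels. At each level one either keeps the branching of $C_a$ or replaces it by a ``non-branching'' step: a single large gap is assigned to an interval, which is then refined, so that effectively $2^{k}$ intervals of the scale $s_{m(k)}$ appear with $m(k)\ge k$ chosen so that $k\log2/(-\log s_{m(k)})\to t$. The displaced gaps are put into the countable part.

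Continuity of the resulting dimension in the parameter, or a direct choice of $m(k)$, gives the exact value $t$. The estimates from the first two steps, applied to the new scales, compute its dimension. The delicate point is verifying the gap-separation condition for the mass distribution argument in the rearranged set. I would simply cite \cite{BeyTa,cabrelli2010classifying} for this part rather than redo it.
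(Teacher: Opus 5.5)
The paper gives no proof of this proposition; it is quoted from \cite{BeyTa,cabrelli2010classifying,garcia2007dimension}. Your dyadic reduction is correct. The precise reason is that $s_{k+1}\le x_n/n\le s_k$ for $2^k\le n<2^{k+1}$, together with $-\log s_k\ge k\log 2$; comparability up to bounded factors is not available in general. Your extremality argument is also correct. The at most $n$ closed intervals left by the first $n-1$ gaps of $E$ have total length $x_n$, and Jensen gives $\sum_i|J_i|^s\le n(x_n/n)^s$. Applied to $E=C_a$, this already gives the upper bound $\dimh C_a\le d$.

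\textbf{The lower bound rests on a false claim.} The inequality $a_{2^{k+1}}\ge s_{k+1}$ fails in general. The quantity $s_{k+1}=2^{-(k+1)}x_{2^{k+1}}$ is the whole infinite tail divided by $2^{k+1}$, not an average of $2^{k+1}$ later terms. Monotonicity only gives
\[
a_{2^{k+1}}\ \ge\ 2^{-(k+1)}\sum_{i=2^{k+1}}^{2^{k+2}-1}a_i\ =\ s_{k+1}-2s_{k+2}.
\]
This is comparable to $s_{k+1}$ only under a condition like $\inf_k s_k/s_{k+1}>2$, which is why Lemma~\ref{consequences H}(a) needs \eqref{hyp H}. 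For example, if $a_n\asymp 1/(n\log^2 n)$, then $x_{2^k}\asymp 1/k$ while $2^k a_{2^k}\asymp 1/k^2$. Hence $a_{2^k}/s_k\to 0$, and the gaps do not separate the step-$(k+1)$ intervals at scale $s_{k+1}$. Your aside that $s_{k+1}\ge s_k/4$ ``fails only in a controlled way'' is also unfounded, since $s_{k+1}/s_k$ need not be bounded below.

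Neither claim is needed. Use lengths instead of gaps, exactly as in \eqref{mass estimate}. The step-$k$ intervals are pairwise disjoint with length $>s_{k+1}>|U|$. So an interval $U$ meets at most two of them, since otherwise its convex hull would contain the middle one, and hence $\mu(U)\le 2^{1-k}$. Since $s<d$ gives $2^{-m}\le s_m^{s}$ for all large $m$, and $|U|\ge s_{k+2}$, you get $\mu(U)\le 8\cdot 2^{-(k+2)}\le 8|U|^s$. The mass distribution principle then yields $\dimh C_a\ge s$. Index shifts only cost powers of $2$, and no information on the gaps is used.

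\textbf{The intermediate values are not proved.} Your paragraph is a sketch rather than an argument. It does not say what the ``non-branching step'' does with the gaps, nor how the exact value $t$, rather than an approximation, is reached. So this part, as in the paper, rests entirely on the citation. Your own formula together with the gluing of Step~6 in the proof of Theorem~\ref{main functions} does reduce the case $0<t<\dimh C_a$ to one task: choosing a subsequence $b$ of $a$ with $\liminf_k k\log 2/(-\log s_k(b))=t$. This suffices because the countable part has Hausdorff dimension $0$. That choice is the genuinely non-trivial step.
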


Moreover, it was shown in \cite[Proposition 3.3]{garcia2007dimension} that $\dim_H C_a = \lbd C_a$.

The box dimensions of sets in $\mathcal{C}_a$ were carefully studied in \cite[Chapter 3.2]{falconer1997techniques}. There, Falconer shows that the upper and lower box dimensions of sets in $\mathcal{C}_a$ depend only on the sequence $a$, and not on the rearrangement. That is, given any complementary set, its lower box dimension equals that of every other complementary set, and the same holds for the upper box dimension. Furthermore, for any $E \in \mathcal{C}_a$,
\[
\liminf_{n \to \infty} \frac{\log n}{-\log a_n} \leq \lbd E \leq \ubd E \leq \limsup_{n \to \infty} \frac{\log n}{-\log a_n},
\]
and the box dimension exists if and only if these limits coincide.

The upper box dimension of $C_a$ (and therefore of any other complementary set in $\mathcal{C}_a$) is given by 
\[
\limsup_{n \to \infty} \frac{\log n}{-\log (x_n/n)},
\]
which, as shown in \cite{tricot1994curves}, is equal to 
\[
\limsup_{n \to \infty} \frac{\log n}{-\log a_n}.
\]
This equivalence does not hold for the lower box dimension. In fact, the inequality
\[
\liminf_{n \to \infty} \frac{\log n}{-\log a_n} \leq \liminf_{n \to \infty} \frac{\log n}{-\log (x_n/n)}
\]
can be strict; see, for instance, \cite[Proposition 5]{cabrelli2004hausdorff}.
The Assouad dimension and the lower Assouad dimension of sets in \( \mathcal{C}_a \) were studied in \cite{garcia2018assouad}. There, the authors prove that, given a non-increasing summable sequence \( a = \{a_n\} \),
\[
\dim_A C_a = \limsup_{n \to \infty} \left( \sup_k \frac{n \log 2}{\log(s_k / s_{k+n})} \right),
\]
and
\[
\dim_L C_a = \liminf_{n \to \infty} \left( \inf_k \frac{n \log 2}{\log(s_k / s_{k+n})} \right).
\]
Moreover, it was shown that \( \dim_A D_a = 0 \) or \( 1 \), and if \( E \) is any complementary set in \( \mathcal{C}_a \), then
\[
\dim_A C_a \leq \dim_A E \leq \dim_A D_a,
\]
and
\[
0 = \dim_L D_a \leq \dim_L E \leq \dim_L C_a.
\]

Furthermore, if the sequence \( a \) is doubling, i.e., there exists a constant \( c > 0 \) such that \( a_n \leq c \, a_{2n} \) for all \( n \), then
\[
\{\dim_A E: E \in \mathcal{C}_a\} = [\dim_A C_a, 1],
\]
and if instead of doubling, we assume that \( \inf \frac{s_n}{s_{n+1}} > 2 \), then
\[
\{\dim_L E: E \in \mathcal{C}_a\} = [0, \dim_L C_a].
\]

For $\theta$-intermediate dimensions, the problem was studied recently by
Angelini and Molter~\cite{angelini2026intermediate}. They computed the
intermediate dimensions of $D_a$ and, under mild assumptions on $a$, those of
$C_a$ \cite[Theorems 4.2 and 4.3]{angelini2026intermediate}. They also proved
the following results for complementary sets. We state them only for the
upper intermediate dimensions; the analogous statements for the lower
intermediate dimensions also hold.

\begin{lem}\cite[Lemma 5.2 and Lemma 5.3]{angelini2026intermediate}\label{Maxmin}
    Let $a=\{a_n\}$ be a summable non-increasing sequence and let $E\in \mathcal{C}_a$ be any complementary set. Then
    $$\uda D_a \leq \uda E.$$
    Moreover, if the sequence satisfies $\sup_n s_n(a)/s_{n+1}(a)<\infty$ then
    $$\uda E \leq \uda C_a.$$
\end{lem}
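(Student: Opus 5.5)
The two inequalities of Lemma~\ref{Maxmin} are of different nature, so I would prove them separately.

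\textbf{Lower bound.} I would reduce it to a general inequality that depends only on the upper box dimension. By \cite[Theorems 4.2]{angelini2026intermediate}, $\uda D_a=\theta B/(1-(1-\theta)B)$ with $B=\ubd C_a$. Every $E\in\mathcal C_a$ has $\ubd E=B$, since box dimensions do not depend on the arrangement of the gaps. It therefore suffices to prove that every bounded $F\subset\mathbb R$ satisfies
\[
\uda F\ge \frac{\theta\,\ubd F}{1-(1-\theta)\ubd F}.
\]
I would prove this directly from Definition~\ref{adef}. Fix $s$ below the right-hand side and $t<\ubd F$. Choose $\delta$ so that $N_{r}(F)\ge r^{-t}$ for $r=\delta^{1/\theta}$, which is possible along a sequence of scales. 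Take any cover $\{U_i\}$ of $F$ with $\delta^{1/\theta}\le|U_i|\le\delta$. Each $U_i\subset\mathbb R$ is covered by $\lesssim |U_i|/r$ intervals of length $r$, so
\[
r^{-t}\lesssim\sum_i |U_i|\,r^{-1}.
\]
Since $|U_i|^{1-s}\le\delta^{1-s}$, this gives
\[
\sum_i|U_i|^s\ \ge\ \delta^{s-1}\sum_i |U_i|\ \gtrsim\ \delta^{s-1}\,r^{1-t}=\delta^{\,s-1+(1-t)/\theta}.
\]
This is bounded below exactly when $s\ge 1-(1-t)/\theta$, which is the wrong direction. So one must also use the smallest-scale bound $\sum_i|U_i|^s\ge \#\{i\}\,r^{s}$ and interpolate between the two estimates. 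The interpolation is over the proportion of mass carried by sets of each size, and one optimises over an intermediate size $\delta^{\gamma}$. Balancing the exponents yields the value $\theta t/(1-(1-\theta)t)$; this is the familiar Falconer--Fraser--Kempton computation with $d=1$. Letting $t\to B$ gives the claim. Alternatively, one could invoke that $\theta\mapsto\uda F$ lies in $\mathcal H(0,1)$ and integrate the Dini inequality down from $h(1)=B$; this gives the same closed form.

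\textbf{Upper bound.} Here I would compare covers directly. Fix $k$ and let $J_1,\dots,J_{2^k}$ be the closed components of $[0,1]$ minus the $2^k-1$ largest gaps $a_1,\dots,a_{2^k-1}$. These contain $E$, and $\sum|J_i|=x_{2^k}=2^ks_k$. The same holds for the level-$k$ intervals of $C_a$, where each length is $\asymp s_k$ by the hypothesis $\sup s_n/s_{n+1}<\infty$. The first step is to show that, up to constants, a near-optimal cover for $\uda C_a$ at scale window $[\delta^{1/\theta},\delta]$ can be taken to consist of level intervals $I_{k,j}$. Such a cover is described by a stopping rule, namely at which level $k(j)$ each branch stops, with $s_{k(j)}\in[\delta^{1/\theta},\delta]$ up to constants. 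By convexity and the homogeneity of $C_a$, the stopping level can be taken to depend only on the level and not on the branch. So the cost reduces to $\min$ over choices of the form $\sum_k n_k s_k^s$, with $n_k$ the number of intervals stopped at level $k$.

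I would then mimic this cover for $E$. The nested family of components obtained by removing the largest $2^m-1$ gaps, $m=1,2,\dots$, forms a binary tree, but its pieces have non-uniform lengths. At each step I keep a component $J$ as a covering set when $|J|\le\delta$ and the level is the chosen stopping level. Components that become shorter than $\delta^{1/\theta}$ prematurely are replaced by a single set of size $\delta^{1/\theta}$. Components longer than $\delta$ are chopped into $\lceil |J|/\delta\rceil$ pieces. The total length at level $m$ is always $2^ms_m$, so Jensen/concavity of $x\mapsto x^s$ shows that the cost at each level is at most a constant times that of $C_a$ at the same level. The bounded ratio $s_n/s_{n+1}$ absorbs the rounding between consecutive levels.

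The main obstacle I expect is the reduction in the upper bound to level-uniform stopping rules for $C_a$, together with the control of the chopped long components of $E$. For the latter, the extra count $\sum|J|/\delta$ must be dominated by $2^{m'}$ for a level $m'$ with $s_{m'}\asymp\delta$. I would try to obtain this from $\sum_{|J|>\delta}|J|\le x_{2^{m'}}+2^{m'}\delta$, using that the gaps removed between levels $m$ and $m'$ have total length at most the total length of the components.
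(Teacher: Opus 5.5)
The paper does not prove this lemma; it quotes it. The only argument it gives is a re-derivation of the lower bound in Remark~\ref{rmk rigidity}.

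\textbf{Lower bound.} Your reduction is correct. Your fallback, integrating the Dini bound down from $h(1)=B$, is exactly the paper's route: Theorem~\ref{BanajiRutar} puts $\uda E$ in $\mathcal H(0,1)$, and Lemma~\ref{monotone quotient} makes $Q_h$ non-increasing. Proposition~\ref{lower bound rigidity}(i) together with Theorem~\ref{dimDa} (Theorem 4.3 of the cited paper, not 4.2) then gives $\uda E\ge\varphi_B(\theta)=\uda D_a$.

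The direct Falconer--Fraser--Kempton bound is a more elementary alternative. It avoids Theorem B and also covers $B\in\{0,1\}$, which Lemma~\ref{monotone quotient} excludes. However, your computation is set up wrongly.
\begin{itemize}
\item Your first estimate is bounded below when $s\le 1-(1-t)/\theta$, not $\ge$.
\item With the large covering number placed at the bottom scale $r=\delta^{1/\theta}$, nothing controls covers by sets of size near $\delta$, so no interpolation can improve that weak bound.
\end{itemize}
The fix is to choose $r$ with $N_r(F)\ge r^{-t}$ first, and then set $\delta:=r^{1/\gamma}$ with $\gamma=1-s+s/\theta\in[1,1/\theta]$. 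Split the cover by size:
\begin{itemize}
\item sets with $|U_i|<r$ meet at most two $r$-intervals and have $|U_i|^s\ge\delta^{s/\theta}$;
\item sets with $|U_i|\ge r$ meet at most $3|U_i|/r$ of them and satisfy $|U_i|\le\delta^{1-s}|U_i|^s$.
\end{itemize}
Since $r^{-1}\delta^{1-s}=\delta^{-s/\theta}$, you get $r^{-t}\lesssim\delta^{-s/\theta}\sum_i|U_i|^s$. Hence $\sum_i|U_i|^s\gtrsim\delta^{s/\theta-\gamma t}$, which is bounded below exactly when $s\le\theta t/(1-(1-\theta)t)$.

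\textbf{Upper bound.} Here there is a genuine gap, at the step you flag yourself. The level-$k$ components $J$ of $E$ and the identity $\sum|J|=x_{2^k}=2^ks_k$ are the right objects. But a multi-level stopping rule for $C_a$ does not transfer to $E$, for two reasons.
\begin{itemize}
\item The components at a given level can be extremely unequal. For $E=D_a$, all the length sits in the single component $[0,x_{2^k}]$.
\item The concavity bound $\sum_{i\le n}|J_i|^s\le n(L/n)^s$ needs the total length $L$ of exactly the components you stop, and you only know this for a complete level.
\end{itemize}
So your comparison works only for single-level covers, and reducing $C_a$ to such covers is the missing step. This is an averaging (mass distribution) fact, not convexity. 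With $\mu(I_{k,j})=2^{-k}$, a cover using $n_k$ intervals of level $k$ has $\sum_kn_k2^{-k}\ge1$, so
\[
\sum_kn_ks_k^t=\sum_k(n_k2^{-k})(2^ks_k^t)\ge\min_k2^ks_k^t
\]
over the levels used. In this paper that is Lemma~\ref{dimension scheme}. It applies to $C_a$ under $\sup_ns_n/s_{n+1}<\infty$ alone, because $s_{k+1}<|I_{k,j}|<s_{k-1}$ makes the scheme $(\rho,K)$-homogeneous with $\rho_k=s_k$ and $K=\sup_ns_n/s_{n+1}$.

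With this lemma the proof is immediate, and your enlarging, chopping at scale $\delta$ and auxiliary level $m'$ become unnecessary. Take $t>t'>\uda C_a$ and $\delta$ small, pick $\phi\in[\delta^{1/\theta},\delta]$ with $s_\rho(\phi)<t'$, and let $k=k_\rho(\phi)$, so that $2^k<\phi^{-t'}$ and $s_k\le\phi$. The $2^k$ level-$k$ components of $E$ have total length $2^ks_k\le 2^k\phi$. They are therefore covered by at most $2^{k+1}$ intervals of length exactly $\phi$, with $t$-cost less than $2\delta^{t-t'}$.
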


\begin{theo}\cite[Theorem 6.1]{angelini2026intermediate}\label{attainablevaluesangelini}
    Let $\theta\in(0,1]$ and let $a=\{a_n\}$ be a positive, non-increasing, summable sequence. If
    $$2<\inf_n \frac{s_n(a)}{s_{n+1}(a)}\leq \sup_n \frac{s_n(a)}{s_{n+1}(a)}<\infty,$$
    then for every
    $$t\in[\uda D_a , \uda C_a],$$
    there exists a set $E=E(t,\theta)\in\mathcal{C}_a$ such that $\uda E=t$.
\end{theo}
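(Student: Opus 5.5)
The plan is to realise $t$ by a set of the form $E=C\cup D$ placed in two disjoint subintervals of $[0,1]$. Here $C$ is a binary Cantor-type set built from a subfamily $b$ of the gaps of $a$, and $D$ is a countable set, of the same type as $D_a$, built from the remaining gaps $a'$. Upper intermediate dimensions are finitely stable, so
\[
\uda E=\max\{\uda C,\ \uda D\}.
\]
The explicit formula for the countable set, $\uda D_{a'}=\theta B'/(1-(1-\theta)B')$ with $B'=\ubd$ of the associated set, is increasing in $B'$. Since $a'$ is a subsequence of $a$, we have $B'\le B=\ubd C_a$, and hence $\uda D\le\uda D_a\le t$. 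It therefore suffices to construct $C$ from some of the gaps of $a$ with $\uda C=t$.

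\textbf{Step 1.} I would build $C$ as a decreasing-type Cantor set whose level-$n$ intervals have length comparable to $s_{m(n)}(a)$, where $m:\mathbb{N}\to\mathbb{N}$ is non-decreasing, satisfies $m(n)\ge n$, and has increments $m(n+1)-m(n)\in\{0,1\}$. Concretely, at level $n$ each of the $2^n$ intervals receives a gap taken from the $m(n)$-th generation of gaps of $C_a$. Because $2^{m(n)}\ge 2^n$, there are always enough gaps, and the unused gaps are sent to $a'$.
\begin{itemize}
\item The hypothesis $\inf_n s_n/s_{n+1}>2$ guarantees that the mass $x_{2^{m}}$ in the tail decays faster than the number of intervals grows. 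This makes the lengths consistent: the interval lengths stay comparable to $s_{m(n)}$, and the sum of all the gaps used plus the remaining gaps is $1$.
\item The hypothesis $\sup_n s_n/s_{n+1}<\infty$ makes consecutive scales comparable. This lets us apply the intermediate-dimension formula for Cantor sets with comparable scales, in the spirit of \cite[Theorems 4.2 and 4.3]{angelini2026intermediate}. It expresses $\uda C$ through the covering numbers $2^n$ at scale $s_{m(n)}$, optimised over windows $[\delta^{1/\theta},\delta]$.
\end{itemize}

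\textbf{Step 2.} Next I would introduce a one-parameter family $m_p$, $p\in[0,1]$. For example, let $m_p(n)$ advance by one step at each level with frequency depending on $p$, interpolated so that $m_0(n)=n$, which gives $C_a$ itself. As $p\to1$, the delays should become sparse in a way that pushes the dimension of $C$ down to at most $\uda D_a$. Then I would show that $p\mapsto\uda C_{m_p}$ varies continuously, or at least without jumps, and apply the intermediate value theorem to obtain $t$ in $[\uda D_a,\uda C_a]$.

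\textbf{Main obstacle.} I expect the hardest part to be this continuity/intermediate-value step. The map $p\mapsto\uda$ is defined via $\limsup$ over scales and need not be continuous. The first thing I would try is to avoid continuity altogether by constructing $m$ directly. Work block by block along a sparse sequence of scales $\delta_k$. On each window $[\delta_k^{1/\theta},\delta_k]$, choose the delay so that the optimal cost exponent there is exactly $t$, using the $C_a$ scales where they give less than $t$ and stretching the scales where they give more. Throughout, the bounds from the two hypotheses give uniform error terms of size $O(1/\log(1/\delta_k))$ in the exponent, so that the $\limsup$ equals $t$.

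The endpoint $t=\uda D_a$ needs separate care: there I would take $C$ to be trivially small, for instance finite, and let $D$ carry all the gaps.
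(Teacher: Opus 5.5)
\textbf{Verdict: there is a genuine gap.} Your reduction is correct and matches the structure of the cited proof (see Remark~\ref{hsfunctions}). You put a Cantor part built from a subfamily of $a$ next to a translate $D'$ of $D_{a'}$ built from the leftover gaps, use finite stability, and note that $\uda D'=\varphi_{B'}(\theta)\le\varphi_B(\theta)=\uda D_a\le t$. The endpoint $t=\uda D_a$ is handled by $E=D_a$. The gap is in producing the Cantor part. First, the family you set up cannot lower the dimension at all. If $m(n)\ge n$ and $m(n+1)-m(n)\in\{0,1\}$, then $m(n)-n$ is a non-increasing, non-negative integer sequence, hence eventually constant. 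So $s_{m(n)}(a)\asymp s_n(a)$ by \eqref{hyp H}, and $\uda C=\uda C_a$ for every such $m$ (Lemma~\ref{dimension scheme}). Your normalisation $m_0(n)=n$, with advances occurring ``with frequency depending on $p$'', would give $m_p(n)<n$. That cannot be realised with gaps of $a$ (cf.\ Remark~\ref{rmk minimum}: level-$n$ intervals of a set built from a subfamily of $a$ have length $\lesssim s_n(a)$). Larger intervals would also raise, not lower, the dimension. To move towards $\varphi_B(\theta)$, the index $m(n)$ has to run ahead of $n$ by an unbounded amount, so the increments must be $\ge 1$ and a positive proportion of them $\ge2$.

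Second, even with the direction fixed, the step you flag as the crux is left open. Nothing shows that $p\mapsto\uda C_{m_p}$ has no jumps. The block-by-block fallback is not yet an argument: $\uda$ is a $\limsup$ over all $\delta$, so every window must be controlled, not only those at the sparse scales $\delta_k$. Moreover, each stretch permanently shifts the level/scale relation, so it affects all later windows. The missing idea is an exact rescaling that makes the dependence on the parameter explicit, which is what the cited proof does. Take a subsequence $b$ of $a$ with $s_n(b)\asymp s_n(a)^s$ for a parameter $s\ge1$; in your language, choose $m(n)$ with $s_{m(n)}(a)\approx s_n(a)^s$. Since $k_{\rho^s}(\phi^s)=k_\rho(\phi)$, one has $s_{\rho^s}(\phi^s)=s_\rho(\phi)/s$. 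Also $\phi\in[\delta^{1/\theta},\delta]$ if and only if $\phi^s\in[(\delta^s)^{1/\theta},\delta^s]$. So, up to harmless constants, $\uda C_b=\uda C_a/s$ for every $\theta$ simultaneously. Hence $\uda(C_b\cup D')=\max\{\uda C_a/s,\varphi_B(\theta)\}$, which is explicit and continuous in $s$. The choice $s=\uda C_a/t$ then works for every $t\in(\varphi_B(\theta),\uda C_a]$; note that $\uda C_a\ge\dim_L C_a>0$ by Lemma~\ref{consequences H}, so $s$ is finite. Within this paper's tools you could even skip the subsequence $b$. The gap placement of Step~5 in the proof of Theorem~\ref{main functions} applies verbatim to $\rho_k=C^{-1}s_k(a)^s$, because $\rho_k/\rho_{k-1}\le\kappa_1^{-s}<1/2$ and $\rho_k\le C^{-1}s_k(a)$.
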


The lower bound in Lemma~\ref{Maxmin} is given explicitly by the following
result.

\begin{theo}\cite[Theorem 4.3]{angelini2026intermediate}\label{dimDa}
    Let $a=\{a_n\}$ be a non-increasing summable sequence. Then, for all
    $\theta\in[0,1]$,
    $$\uda D_a=\frac{\theta\,\overline{\dim}_{\mathrm{B}} D_a}
    {1-(1-\theta)\,\overline{\dim}_{\mathrm{B}} D_a}.$$
\end{theo}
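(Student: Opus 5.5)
The plan is to get the lower bound for free from Proposition~\ref{banajis bound}, and the upper bound from an explicit two-scale cover of $D_a$. Write $B=\ubd D_a$. If $B=0$ both sides vanish and there is nothing to prove. If $B=1$ the right-hand side equals $1$, and $\uda D_a\le \ubd D_a=1$, so only the lower bound matters. At $\theta=0$ the formula gives $0=\dimh D_a$, since $D_a$ is countable. So I will assume $0<B<1$ and $\theta\in(0,1]$.

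\emph{Lower bound.} From Section~\ref{Comp sets} we know $\dim_L D_a=0$ and $\dim_A D_a\in\{0,1\}$. Since $\dim_A D_a\ge \ubd D_a=B>0$, we get $\dim_A D_a=1$, so $\dim_L D_a<\dim_A D_a$. Proposition~\ref{banajis bound} with $\dim_L=0$, $\dim_A=1$ and $\ubd=B$ then gives
\[
\uda D_a\ \ge\ \frac{\theta B}{\theta B+(1-B)}=\frac{\theta B}{1-(1-\theta)B}.
\]

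\emph{Upper bound.} Fix $t\in(B,1)$. Since $B=\limsup \log n/(-\log a_n)$, we have $a_n\le n^{-1/t}$ for all large $n$. Summing gives the tail estimate $x_N=\sum_{i\ge N}a_i\lesssim N^{1-1/t}$. Given a small $\delta$, set $N=\lceil\delta^{-\gamma}\rceil$, with $\gamma>0$ to be chosen. The cover has two parts:
\begin{itemize}
\item each of the points $x_1,\dots,x_{N}$ is covered by its own interval of length $\delta^{1/\theta}$;
\item the tail $[0,x_N]$, which contains all remaining points of $D_a$, is covered by at most $x_N/\delta+1$ intervals of length $\delta$.
\end{itemize}
All diameters lie in $[\delta^{1/\theta},\delta]$, and the $s$-cost is at most
\[
N\delta^{s/\theta}+\Bigl(\frac{x_N}{\delta}+1\Bigr)\delta^{s}\ \lesssim\ \delta^{s/\theta-\gamma}+\delta^{\,s-1+\gamma(1/t-1)}+\delta^{s}.
\]
This tends to $0$ as $\delta\to0$ provided $\gamma<s/\theta$ and $\gamma>(1-s)t/(1-t)$. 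Such a $\gamma$ exists exactly when
\[
s(1-t)>\theta t(1-s),\qquad\text{i.e.}\qquad s>\frac{\theta t}{1-(1-\theta)t}.
\]
Hence $\uda D_a\le \theta t/(1-(1-\theta)t)$ for every $t\in(B,1)$. Letting $t\downarrow B$ gives the matching upper bound.

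The only delicate point is choosing the splitting index $N$ as a power of $\delta$ and balancing the two exponent constraints; the rest is routine. The tail-sum estimate $x_N\lesssim N^{1-1/t}$ uses $t<1$, which is why the case $B=1$ is handled separately.
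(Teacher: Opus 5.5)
Your proof is correct. This paper does not reprove the statement; it quotes it from \cite{angelini2026intermediate}. So the only available comparison is with what the paper says around it.

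Your lower bound is exactly the observation the paper makes right after the theorem: since $\dim_L D_a=0$ and $\dim_A D_a=1$, the value $\theta B/(1-(1-\theta)B)$ is the smallest one allowed by Proposition~\ref{banajis bound}.

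The paper also gives a second route, in Proposition~\ref{lower bound rigidity}(i). By Theorem~\ref{BanajiRutar} and Remark~\ref{inclusion H}, the function $\theta\mapsto\uda D_a$ lies in $\mathcal{H}(0,1)$ and equals $B$ at $\theta=1$. The monotonicity of $Q_h$ (Lemma~\ref{monotone quotient}) then yields $\uda D_a\geq\varphi_B(\theta)$ without knowing the Assouad-type dimensions of $D_a$.
\begin{itemize}
\item That route requires $B<1$.
\item Your argument via Proposition~\ref{banajis bound} covers $B=1$ verbatim, since the denominator becomes $\theta>0$. This settles the lower bound you left implicit in that case.
\end{itemize}

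Your upper bound is the standard two-scale cover used by Falconer, Fraser and Kempton for $\{n^{-p}\}$: $\delta^{1/\theta}$-intervals on the first $N\approx\delta^{-\gamma}$ points and $\delta$-intervals on $[0,x_N]$. The exponent bookkeeping is right. It only uses the inequality $\limsup_n \log n/(-\log a_n)\leq\ubd D_a$, which is elementary because $x_1,\dots,x_n$ are $a_n$-separated.

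One small caveat: when $B=1$ the formula at $\theta=0$ reads $0/0$, while $\dimh D_a=0$. So your remark that it gives $0$ at $\theta=0$ holds only for $B<1$. This is a defect of the statement, not of your argument.
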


Since $\dim_{\mathrm{L}} D_a=0$ and $\dim_{\mathrm{A}} D_a=1$, this is the
smallest value allowed by Proposition~\ref{banajis bound}. Moreover, since the upper (and respectively lower) box dimension
is the same for every set in $\mathcal{C}_a$, the lower bound depends
only on $\overline{\dim}_{\mathrm{B}} C_a$.

The proof of Theorem~\ref{attainablevaluesangelini} is constructive, and it
gives more than the statement. Indeed, the set constructed there depends only
on a parameter $s>1$, and not on $\theta$.

\begin{rem}\label{hsfunctions}
    Let $a$ be as in Theorem~\ref{attainablevaluesangelini}, write
    $B:=\overline{\dim}_{\mathrm{B}} C_a$, and let $s>1$. The proof of
    \cite[Theorem 6.1]{angelini2026intermediate} provides a subsequence
    $b$ of $a$ with $s_n(b)\asymp s_n(a)^s$. In particular, $b$ satisfies
    $$2<\inf_n \frac{s_n(b)}{s_{n+1}(b)}\leq
    \sup_n \frac{s_n(b)}{s_{n+1}(b)}<\infty.$$
    It also provides a set $E_s=C_b\cup D'\in\mathcal{C}_a$, where $D'$ is
    a translate of $D_{a'}$ and $a'$ is the non-increasing rearrangement of
    the remaining terms of $a$. This set satisfies
    $$\uda E_s=h_s(\theta):=\max\Big\{\frac{\uda C_a}{s},\,
    \frac{\theta B}{1-(1-\theta)B}\Big\}
    \quad\text{for all }\theta\in[0,1].$$
\end{rem}

Thus the functions $h_s$, $s>1$, are realized as upper intermediate dimension
functions of complementary sets. By Lemma~\ref{Maxmin} and
Theorem~\ref{dimDa}, every such function $h$ satisfies
$$\frac{\theta B}{1-(1-\theta)B}\leq h(\theta)\leq \uda C_a .$$
This led to the question, raised in
\cite[Question 1]{angelini2026intermediate}, of describing the families
$$\mathcal{F}^U(a):=\{\theta\mapsto \uda E: E\in\mathcal{C}_a\}
\quad\text{and}\quad
\mathcal{F}^L(a):=\{\theta\mapsto \underline{\dim}_{\theta} E:
E\in\mathcal{C}_a\}.$$
The main result of this paper answers this question for $\mathcal{F}^U(a)$
under the hypothesis of Theorem~\ref{attainablevaluesangelini}.

For a recent survey of complementary sets and their dimensions, including developments beyond the scope of the present section, we refer the reader to \cite{survey}.

\section{Attainable dimension functions}\label{sec attainable functions}

In this section we prove the characterization of $\mathcal{F}^U(a)$
announced above. In Subsection~\ref{4.1} we recall the class of functions
$\mathcal{H}(\lambda,\alpha)$ and the results from \cite{Banaji-R22} that we
need. In Subsection~\ref{4.2} we reformulate the defining condition of
$\mathcal{H}(0,1)$ as the monotonicity of a quotient. This yields a sharp lower bound
for the functions in $\mathcal{F}^U(a)$ and a rigidity property. In
Subsections~\ref{4.3} and~\ref{4.4} we develop the two tools of the construction: the
homogeneous Cantor schemes, whose intermediate dimensions depend only on
their scales, and the mountains. The main result, Theorem~\ref{main functions} is proved in Subsection~\ref{4.5}.

Throughout this section we write
\[
B:=\ubd C_a \qquad\text{and}\qquad \varphi_B(\theta):=\frac{\theta B}{1-(1-\theta)B}\quad \text{for } \theta\in[0,1].
\]
Recall that $\ubd E=B$ for every $E\in\mathcal{C}_a$, moreover $\uda D_a=\varphi_B(\theta)$ by Theorem \cite[Theorem 4.3]{angelini2026intermediate}. As usual, we adopt the convention $\delta^{1/0}=0$ in Definition \ref{adef}, so that $\overline{\dim}_{\,0}E=\dimh E$ for every bounded set $E$. We will say that the sequence $a$ satisfies \emph{hypothesis} \eqref{hyp H} if
\begin{equation}\label{hyp H}\tag{H}
2<\kappa_1:=\inf_n \frac{s_n(a)}{s_{n+1}(a)}\leq \kappa_2:=\sup_n \frac{s_n(a)}{s_{n+1}(a)}<\infty .
\end{equation}

\subsection{Classes of functions and Banaji--Rutar results}\label{4.1}

Given a function $f$ defined on an interval, we write $D^+f$, $D_+f$, $D^-f$ and $D_-f$ for its upper right, lower right, upper left and lower left Dini derivatives, respectively. For instance,
\[
D^+f(x)=\limsup_{\epsilon\to 0^+}\frac{f(x+\epsilon)-f(x)}{\epsilon}
\]
and
\[
D_-f(x)=\liminf_{\epsilon\to 0^+}\frac{f(x)-f(x-\epsilon)}{\epsilon},
\]
whenever the corresponding one sided increments are defined.

\begin{defi}[{\cite[Definition 1.3]{Banaji-R22}}]\label{def H}
Let $0\leq\lambda\leq\alpha\leq 1$. If $\lambda<\alpha$, we denote by $\mathcal{H}(\lambda,\alpha)$ the set of all functions $h:[0,1]\to[\lambda,\alpha]$ which are non-decreasing, continuous on $(0,1]$, and satisfy
\begin{equation}\label{Dini bound}
D^+h(\theta)\leq \frac{(h(\theta)-\lambda)(\alpha-h(\theta))}{(\alpha-\lambda)\theta}\quad\text{for all } \theta\in(0,1).
\end{equation}
If $\lambda=\alpha$, we let $\mathcal{H}(\lambda,\alpha)$ consist only of the constant function $h\equiv\lambda$.
\end{defi}

The next theorem due to Banaji and Rutar provide a necessary and sufficient condition for a given function to be realized as the intermediate dimensions of a bounded subset $F\subset\mathbb{R}^d$. We write the case $d=1$.   

\begin{theo}[{\cite[Theorem B]{Banaji-R22}}]\label{BanajiRutar}
Let $F\subset \mathbb{R}$ be a bounded set with $\dim_L F=\lambda$ and $\dim_A F=\alpha$. Then with $h_1(\theta)=\da F$ and $h_2(\theta)=\uda F$ we have $h_1,h_2\in \mathcal{H}(\lambda,\alpha)$, $h_1\leq h_2$ and $h_1(0)=h_2(0)$.

Conversely, if $0\leq \lambda\leq \alpha\leq 1$ and $h_1,h_2\in \mathcal{H}(\lambda,\alpha)$ satisfy $h_1\leq h_2$ and $h_1(0)=h_2(0)$, then there exists a compact perfect set $F\subset \mathbb{R}$ such that $\dim_L F=\lambda$, $\dim_A F=\alpha$, $\da F=h_1(\theta)$ and $\uda F=h_2(\theta)$ for all $\theta\in[0,1]$.
\end{theo}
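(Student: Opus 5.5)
The statement has two directions, and I would treat them separately. The forward direction reduces to properties of the intermediate dimensions themselves. The converse is a construction.

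\textbf{Necessity.} Monotonicity in $\theta$ and continuity on $(0,1]$ are known from \cite{FFK}. The value at $\theta=0$ is $\dimh F$ for both $h_1$ and $h_2$, so $h_1(0)=h_2(0)$ is immediate from the convention $\delta^{1/0}=0$. The range $[\lambda,\alpha]$ comes from the chain $\dim_L F\le\dimh F\le \da F\le\uda F\le\ubd F\le\dim_A F$.

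The real content is the Dini bound \eqref{Dini bound}. Fix $\theta$ and a near-optimal cover at exponent $s>h(\theta)$ using sets with diameters in $[\delta^{1/\theta},\delta]$. To pass to $\theta+\epsilon$, the new scale range is $[\delta^{1/(\theta+\epsilon)},\delta]$, which is slightly larger at the bottom. I would try two modifications and optimise between them:
\begin{itemize}
\item keep the old cover, so nothing changes at the scales that are still admissible;
\item split each set of diameter $r$ into pieces of size $r^{\rho}$, for a suitable $\rho>1$. By the definition of $\dim_A F$, each set splits into at most $C\,(r/r^{\rho})^{\alpha+\eta}$ pieces.
\end{itemize}
Dually, I would regroup small sets into larger ones using $\dim_L F$: a ball of radius $R$ contains at least $c\,(R/r)^{\lambda-\eta}$ disjoint $r$-pieces, so merging is cheap. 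Balancing the exponents of these two moves gives $h(\theta+\epsilon)\le h(\theta)+\epsilon\,(h-\lambda)(\alpha-h)/((\alpha-\lambda)\theta)+o(\epsilon)$. Equivalently, one can integrate and obtain the closed-form comparison of Proposition~\ref{banajis bound} between any two values of $\theta$, and then differentiate.

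\textbf{Sufficiency.} Here I would build $F$ as the limit set of a homogeneous Cantor construction in $[0,1]$ with scale sequence $r_k$: at step $k$ there are $N_k$ intervals of length $r_k$. For homogeneous sets, the covering numbers at every scale are explicit, namely $\log N_r(F)$ is a piecewise-linear function $\psi(\log(1/r))$. In these terms:
\begin{itemize}
\item $\uda F$ and $\da F$ become $\limsup$ and $\liminf$ of the infimum of $\psi(t)/t$-type costs over windows $[t,t/\theta]$;
\item $\dim_A F$ and $\dim_L F$ are the extreme slopes of $\psi$ over long windows.
\end{itemize}
So the task becomes: construct a function $\psi$ with all slopes in $[\lambda,\alpha]$, where both extreme slopes genuinely occur, whose window infima have $\limsup$ equal to $h_2$ and $\liminf$ equal to $h_1$.

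I would build $\psi$ by concatenating blocks on rapidly growing scales. For $h_2$, I would use a ``mountain'': the extremal profile that climbs with slope $\alpha$ and descends with slope $\lambda$, whose window infima trace exactly a solution of the ODE saturating \eqref{Dini bound}. Using the Dini inequality, I would show $h_2$ is the pointwise infimum of such extremal ODE solutions. Then a suitable finite superposition of mountains, refined along the sequence of blocks, reproduces $h_2(\theta)$ in the $\limsup$ at every $\theta$. Blocks realizing $h_1$ are interleaved between them, separated by long linear stretches at slope $h_1(0)=h_2(0)$, so the two families do not interfere. Finally, I would insert short segments of slopes exactly $\lambda$ and $\alpha$, so that the Assouad and lower Assouad dimensions equal $\alpha$ and $\lambda$.

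\textbf{Main obstacle.} I expect the hardest step to be the approximation of an arbitrary $h\in\mathcal{H}(\lambda,\alpha)$ by window infima of mountains uniformly in $\theta$, especially near $\theta=0$ where continuity can fail. My first attempt would be a diagonal argument on a countable dense set of $\theta$'s, followed by monotonicity and continuity to pass to all $\theta$.
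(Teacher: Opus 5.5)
You should know that the paper does not prove this theorem: it quotes it from \cite{Banaji-R22}, and the tools it reproduces (Lemma~\ref{dimension scheme}, Lemma~\ref{BR lemma 3.4}, the mountains of Lemma~\ref{mountain}) concern only the upper dimensions. Measured against that, your converse has two genuine gaps.

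\emph{First gap: a homogeneous construction cannot prescribe the lower dimensions.} For any $g$ and any window length $L$ one has $\inf_{[x,x+L]}g\le g(x)$ and $\inf_{[x,x+L]}g\ge\inf_{y\ge x}g(y)$, so
\[
\liminf_{x\to\infty}\ \inf_{y\in[x,x+L]}g(y)=\liminf_{x\to\infty}g(x)\qquad\text{for every }L\geq0 .
\]
So your formula for $\da F$ does not depend on $\theta$. Equivalently, homogeneous sets satisfy $\dimh F=\lbd F$ (compare \eqref{formula scheme H}), which forces $\da F\equiv h_2(0)$. Interleaving ``blocks realizing $h_1$'' cannot change this, so every non-constant $h_1$ is out of reach, for instance $h_1=h_2$ non-constant. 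Since $h_1(0)=\dimh F$ and $h_1(1)=\lbd F$, a non-constant $h_1$ needs a set that is inhomogeneous in space. That requires a genuinely different construction.

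\emph{Second gap: extremal mountains only give the smallest admissible function.} Placing separated blocks one after another makes the $\limsup$ of window infima a supremum over blocks. A pointwise minimum of profiles does not commute with $\sup_x\inf_{[x,x+L]}$, so no ``superposition'' produces an infimum. A mountain climbing at slope $\alpha$ and descending at slope $\lambda$, with peak $H\le h_2(1)$, has as window-infimum function the solution of the saturated ODE through $(1,H)$, cut off at its base. For $\lambda=0$, $\alpha=1$ this is $\varphi_H$, and the supremum over such mountains is $\varphi_{h_2(1)}$. Moreover, $h_2$ is not an infimum of ODE solutions. By Lemma~\ref{monotone quotient}, the solution through $(\theta_0,h_2(\theta_0))$ lies above $h_2$ on $[\theta_0,1]$ but below it on $(0,\theta_0]$.

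The missing idea is a mountain whose right side follows $h_2$ itself. It climbs at the maximal rate from $h_2(\epsilon)$ to $h_2(1)$, passing through $h_2(\theta)$ at $x_L(\theta)$. Then set $f(x_R(\theta))=h_2(\theta)$ with $x_R(\theta)=x_L(\theta)+\log(1/\theta)$. The Dini condition, in the form ``$(h-\lambda)/(\theta(\alpha-h))$ is non-increasing'' (Lemma~\ref{monotone quotient} when $\lambda=0$, $\alpha=1$), is exactly what makes $x_R$ strictly decreasing and the descent admissible; see \eqref{slow descent}. It then yields (M3)--(M4). With such mountains, your diagonal argument over rationals is Step~2 of the proof of Theorem~\ref{main functions}.

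On necessity, Proposition~\ref{banajis bound} compares $\theta$ only with $1$, so differentiating it gives nothing at interior $\theta$. You need a two-parameter inequality for $\theta<\phi$. It comes from turning a $\theta$-cover at scale $\delta$ into a $\phi$-cover at scale $\delta^\gamma$, with $\gamma\in[1,\phi/\theta]$:
\begin{itemize}
\item split the sets larger than $\delta^\gamma$, using $\dim_A F$;
\item enlarge or merge the sets smaller than $\delta^{\gamma/\phi}$, using $\dim_L F$;
\item optimize over $\gamma$.
\end{itemize}
For $\lambda=0$, $\alpha=1$ this gives exactly $Q_h(\phi)\le Q_h(\theta)$. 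In the merging step, a ball around a small set may be covered mostly by large sets of the cover. Your count of disjoint $r$-pieces must therefore be replaced by a content lower bound together with a case distinction.
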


\begin{rmk}\label{inclusion H}
If $0\leq\lambda\leq\alpha\leq1$, then $\mathcal{H}(\lambda,\alpha)\subseteq\mathcal{H}(0,1)$. For constant functions this is clear. If $\lambda<\alpha$ and $h\in\mathcal{H}(\lambda,\alpha)$, write $u=h(\theta)-\lambda$ and $v=\alpha-h(\theta)$. The right hand side of \eqref{Dini bound} equals $\theta^{-1}(u^{-1}+v^{-1})^{-1}$ (with the value $0$ if $uv=0$), which is non-decreasing in $u$ and in $v$. Since $u\leq h(\theta)$ and $v\leq 1-h(\theta)$, we obtain
\[
D^+h(\theta)\leq \frac{h(\theta)(1-h(\theta))}{\theta}\quad\text{for all }\theta\in(0,1).
\]
Moreover, if $0<h(\theta)<1$ and $(\lambda,\alpha)\neq(0,1)$, then the right hand side of \eqref{Dini bound} is strictly smaller than $h(\theta)(1-h(\theta))/\theta$. In particular, the hypothesis $h\in\mathcal{H}(\lambda,\alpha)$ for some $0\leq\lambda<\alpha\leq1$ is equivalent to $h\in\mathcal{H}(0,1)$, i.e.
$$\bigcup_{0\leq \lambda\leq \alpha\leq 1}\mathcal{H}(\lambda,\alpha)= \mathcal{H}(0,1).$$
\end{rmk}

We also need some tools from \cite[Section 3]{Banaji-R22}, which we state only for $d=1$, $\lambda=0$ and $\alpha=1$. We denote by $\mathcal{G}(0,1)$ the set of continuous functions $g:\mathbb{R}\to[0,1]$ such that
\[
D^+g(x)\in[-g(x),\,1-g(x)]\quad\text{for all } x\in\mathbb{R},
\]
see \cite[Definition 3.1]{Banaji-R22}. Given $g\in\mathcal{G}(0,1)$ and $w\in\mathbb{R}$, the \emph{offset} $\kappa_w(g)\in\mathcal{G}(0,1)$ is defined by $\kappa_w(g)(x)=g(x-w)$ if $x\geq w$ and $\kappa_w(g)(x)=g(0)$ if $x\leq w$. A function $g\in\mathcal{G}(0,1)$ is \emph{rapidly decreasing} if there exist $y\in\mathbb{R}$ and $C>0$ such that $g(x)\leq g(y)\exp(y-x)+C\exp(-x)$ for all $x\geq y$. Note that a rapidly decreasing function satisfies $\lim_{x\to\infty}g(x)=0$.

Let $\rho=(\rho_k)_{k\geq0}$ be a strictly decreasing sequence of positive numbers converging to $0$. For $\phi\in(0,\rho_0)$ let $k=k_\rho(\phi)\geq1$ be the unique integer such that $\rho_k\leq \phi<\rho_{k-1}$, and set
\begin{equation}\label{def s rho}
s_\rho(\phi):=\frac{k_\rho(\phi)\log 2}{-\log\phi}.
\end{equation}
Observe that $k_\rho(\phi)=\min\{k\geq0 : \rho_k\leq\phi\}$. If $\mathbf{r}=(r_j)_{j\geq1}\subset(0,1/2]$, we write $s_{\mathbf r}:=s_\rho$ where $\rho_0=1$ and $\rho_k=r_1\cdots r_k$. This is exactly the function $s_{\mathbf r}(\delta)$ of \cite[Section 2.3]{Banaji-R22}.

\begin{lem}[{\cite[Lemma 3.4]{Banaji-R22}}]\label{BR lemma 3.4}
Let $\tilde g\in\mathcal{G}(0,1)$ be a function which is not rapidly decreasing and such that $0<\tilde g(0)<1$. Then there is a constant $w_0\in\mathbb{R}$, depending only on $\tilde g(0)$, such that for every $w\geq w_0$ there exists a sequence $\mathbf r=(r_j)_{j\geq1}\subset(0,1/2]$ for which $g:=\kappa_w(\tilde g)$ satisfies
\[
\bigl|s_{\mathbf r}(\exp(-\exp(x)))-g(x)\bigr|\leq \log 2\,\exp(-x)\quad\text{for all } x\geq w_0 .
\]
\end{lem}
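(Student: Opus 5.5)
The plan is to prove the lemma by rewriting everything in the variable $t=e^x$. There the conditions defining $\mathcal{G}(0,1)$ become monotonicity and Lipschitz bounds on a single counting function, and the sequence $\mathbf r$ is read off from the level sets of that function.

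Put $\phi=\exp(-e^x)$, so $-\log\phi=e^x$. Writing $t_k:=-\log\rho_k$ with $\rho_k=r_1\cdots r_k$ and $t_0=0$, we have $k_\rho(\phi)=\min\{k: t_k\geq e^x\}$. Hence
\[
s_{\mathbf r}(\exp(-e^x))=k_\rho(\exp(-e^x))\,\log 2\; e^{-x},
\]
and it suffices to construct $0=t_0<t_1<t_2<\cdots\to\infty$ with $t_k-t_{k-1}\geq\log 2$ (equivalently $r_k\leq 1/2$) such that
\[
\bigl|k_\rho(\exp(-e^x))-G(x)\bigr|\leq 1, \qquad G(x):=\frac{g(x)e^x}{\log 2},
\]
for $x\geq w_0$.

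First I would record the properties of $G$, where $g=\kappa_w(\tilde g)$ still lies in $\mathcal{G}(0,1)$.
\begin{itemize}
\item Since $D^+G=e^x(g+D^+g)/\log 2$ and $D^+g\geq -g$, we get $D^+G\geq0$. A continuous function with non-negative upper right Dini derivative is non-decreasing, so $G$ is non-decreasing.
\item Set $H(t):=G(\log t)=g(\log t)\,t/\log 2$. The bound $D^+g\leq 1-g$ gives $D^+H(t)\leq 1/\log 2$. By the same monotonicity principle applied to $t/\log 2-H(t)$, we obtain $H(t')-H(t)\leq (t'-t)/\log 2$ for $t<t'$.
\item $H(t)\to\infty$. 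Otherwise $g(x)e^x$ would be bounded, so $g(x)\leq Ce^{-x}$ for large $x$, and then $g$ would be rapidly decreasing (take any $y$ and this $C$). Since the offset only changes $\tilde g$ on an initial segment and shifts it, $g$ is not rapidly decreasing either, so this is excluded.
\end{itemize}

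Next, define $t_k:=\min\{t>0: H(t)\geq k\}$ for $k\geq1$. This minimum exists by continuity, and $t_k$ is finite because $H\to\infty$. Since $H(t_k)=k$ and $H(t_{k-1})=k-1$, the Lipschitz bound gives $t_k-t_{k-1}\geq\log 2$ for $k\geq2$.

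For $k=1$ I would use the offset. On $t\leq e^w$ we have $H(t)=\tilde g(0)t/\log 2$, which is linear with slope $\tilde g(0)/\log 2\in(0,1/\log 2)$. Taking $w_0:=\log(\log 2/\tilde g(0))$, which depends only on $\tilde g(0)$, and requiring $w\geq w_0$, we get $t_1=\log 2/\tilde g(0)\geq\log 2$, so $r_1\leq1/2$. Monotonicity of $H$ also makes the $t_k$ strictly increasing, so $\rho$ is strictly decreasing to $0$.

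Finally, I would compare $k_\rho$ with $G$. For $x$ with $e^x\geq t_1$, the equality $k=k_\rho(\exp(-e^x))$ means $t_{k-1}<e^x\leq t_k$. By the definition of the $t_j$ this forces $k-1\leq H(e^x)\leq k$, so $|k-G(x)|\leq 1$. Multiplying by $\log 2\,e^{-x}$ gives the claimed estimate.

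For $x\geq w_0$ with $e^x<t_1$, and $t_1 = e^{w_0}$ exactly with this choice of $w_0$, we have $k=1$ and $G(x)\in[0,1]$, so the bound still holds. I would double-check that the choice of $w_0$ and this initial range fit together as stated; if not, enlarging $w_0$ by a constant depending only on $\tilde g(0)$ fixes it.

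I expect the only delicate points to be justifying the Dini-derivative monotonicity arguments, which follow from the standard fact that a continuous function with $D^+\geq0$ is non-decreasing, and bookkeeping the initial segment via the offset.
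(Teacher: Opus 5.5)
Your proof is correct. The paper gives no proof of this lemma; it is quoted from Banaji--Rutar, so there is no argument in the paper to compare against. Your argument is a clean self-contained proof. You pass to $t=e^x$ and observe that the two Dini bounds defining $\mathcal{G}(0,1)$ say exactly two things: $H(t)=g(\log t)\,t/\log 2$ is non-decreasing, and its increments are at most $(t'-t)/\log 2$. Taking $t_k$ to be the first time $H$ reaches $k$ then produces the scales. The hypothesis ``not rapidly decreasing'' is used precisely to guarantee $H\to\infty$, so that every $t_k$ exists and $\rho_k\to0$. Your transfer of this hypothesis through the offset is also fine: rapid decrease is equivalent to $g(x)e^x$ being bounded on some half-line, and this property is invariant under $\kappa_w$.

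You can drop the hedging about $w_0$ at the end, because the offset plays no role in your argument.

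\begin{itemize}
\item Since $g\leq 1$, you have $H(t)\leq t/\log 2$ for all $t>0$. So $1=H(t_1)$ already forces $t_1\geq\log 2$, that is $r_1\leq 1/2$, for any $w$.
\item With $t_0=0$ and $H\geq0$, the inequalities $k-1\leq H(e^x)\leq k$ hold whenever $t_{k-1}<e^x\leq t_k$, including $k=1$.
\end{itemize}

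Hence your construction gives $\bigl|s_{\mathbf r}(\exp(-e^x))-g(x)\bigr|\leq\log2\,e^{-x}$ for every $x\in\mathbb{R}$ and every $w$, which is stronger than what is stated. With your particular choice $w_0=\log(\log2/\tilde g(0))$, the case $x\geq w_0$ with $e^x<t_1$ is simply vacuous.
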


Finally, we recall two elementary facts on Dini derivatives.

\begin{lem}[{\cite[Lemma 2.2]{Banaji-R22}}]\label{BR lemma 2.2}
Let $f$ and $g$ be continuous functions on $[a,b]$ with $D_+g\leq D^+f$ on $[a,b)$ and $g(a)=f(a)$. Then $g\leq f$ on $[a,b]$.
\end{lem}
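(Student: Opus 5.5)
The plan is to prove the comparison by the usual perturbation argument. Fix $\epsilon>0$ and set $f_\epsilon(x):=f(x)+\epsilon(x-a)+\epsilon$. It suffices to show $g\leq f_\epsilon$ on $[a,b]$ for every $\epsilon>0$, and then let $\epsilon\to0$. Since $g(a)=f(a)<f_\epsilon(a)$, continuity gives $g<f_\epsilon$ on a right neighbourhood of $a$.

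Suppose, for contradiction, that $g(x_0)>f_\epsilon(x_0)$ for some $x_0\in(a,b]$. Put $c:=\sup\{x\in[a,x_0]: g(x)\leq f_\epsilon(x)\}$. By continuity $c<x_0$, $g(c)=f_\epsilon(c)$, and $g>f_\epsilon$ on the whole interval $(c,x_0]$. Choosing $c$ as the \emph{last} crossing before $x_0$, rather than the first crossing after $a$, is what gives a strict inequality for \emph{every} small increment, not just along a sequence. Concretely, for all $0<t\leq x_0-c$,
\[
\frac{g(c+t)-g(c)}{t}>\frac{f(c+t)-f(c)}{t}+\epsilon .
\]

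Taking $\liminf_{t\to0^+}$ yields $D_+g(c)\geq D_+f(c)+\epsilon$. Taking $\limsup$ along a sequence $t_n\to0^+$ that realises $D^+f(c)$ only yields information about $D^+g(c)$. This is the main obstacle: the hypothesis compares $D_+g$ with the \emph{upper} derivative $D^+f$, so the pointwise inequality must be pushed from the lower derivative of $f$ to its upper one. I would try to handle this as follows.

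First, in every application in the paper the comparison function $f$ is an explicit differentiable function, such as $\varphi_B$ or solutions of the ODE $y'=y(1-y)/\theta$. For such $f$ we have $D_+f=D^+f=f'$, and the display gives $D_+g(c)\geq D^+f(c)+\epsilon>D^+f(c)$, contradicting the hypothesis at $c\in[a,b)$. This settles the lemma in the form needed here.

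For general continuous $f$, I would reduce to the standard monotonicity theorem for Dini derivatives: a continuous function one of whose Dini derivatives is $\leq0$ everywhere on $[a,b)$ is non-increasing. I would apply it to $\psi=g-f_\epsilon$, after checking that the hypothesis controls some Dini derivative of $\psi$. If the inequality $D_+(g-f)\leq D_+g-D^+f$ is not available directly, I would instead localise at the last crossing point $c$ and compare along the sequence realising $D^+f(c)$. If neither works, I would fall back to stating and proving the lemma under the additional assumption that $f$ is differentiable, which is the case actually used.
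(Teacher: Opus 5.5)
Your diagnosis of the obstacle is right, and it cannot be overcome. With $D_+g$ compared against $D^+f$ and no regularity assumed on $f$, the statement as written is false, so neither of your two plans for general continuous $f$ can succeed.

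Take any continuous $f$ on $[a,b]$ with $\limsup_{t\to0^+}|f(x+t)-f(x)|/t=+\infty$ for every $x\in[a,b)$. Two sources of such $f$:
\begin{itemize}
\item Weierstrass's function $W(x)=\sum_{n\geq0}r^n\cos(N^n\pi x)$ with $0<r<1$, $N$ odd and $rN>1+3\pi/2$. Its classical non-differentiability proof produces, for each $x$, points $z_m>x$ with $z_m\to x$ and $|W(z_m)-W(x)|/(z_m-x)\to\infty$.
\item A typical $f\in C[a,b]$. The Baire category proof of nowhere differentiability shows that the sets $\{f:\exists x\in[a,b-1/n]\text{ with }|f(x+t)-f(x)|\leq nt\text{ for all }t\in(0,1/n]\}$, for $n>1/(b-a)$, are closed and nowhere dense.
\end{itemize}
Put $g(x):=f(x)+(x-a)$. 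Then $g(a)=f(a)$ and $D_+g=D_+f+1$. At each $x\in[a,b)$ there are two cases. Either $D^+f(x)=+\infty$, and then $D_+g(x)\leq D^+f(x)$ trivially. Or $D_+f(x)=-\infty$, and then $D_+g(x)=-\infty\leq D^+f(x)$. So the hypotheses hold on $[a,b)$, yet $g(b)-f(b)=b-a>0$.

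In this example $g-f=x-a$ has all four Dini derivatives equal to $1$. This shows concretely two things. First, the hypothesis controls no Dini derivative of $g-f$: the general bounds $D_+(g-f)\leq D_+g-D_+f$ and $D_+(g-f)\leq D^+g-D^+f$ never pair $D_+g$ with $D^+f$. Second, at a last crossing point $c$, the sequence realising $D^+f(c)$ only yields $D^+g(c)\geq D^+f(c)+\epsilon$, which is compatible with $D_+g(c)\leq D^+f(c)$.

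So your fallback is not optional: some regularity must be added. One choice is that $f$ has a finite right derivative at every point of $[a,b)$. Symmetrically, you could assume $g$ does; then the monotonicity theorem applies to $f-g$, since $D^+(f-g)=D^+f-g'_+\geq0$.

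Under the assumption on $f$, your last-crossing argument is complete and correct. It gives $D_+g(c)\geq f'_+(c)+\epsilon>D^+f(c)$ at some $c\in[a,b)$, contradicting the hypothesis, and letting $\epsilon\to0$ finishes.

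This restricted version is all the paper needs. The paper does not prove the lemma; it quotes it. It invokes it only once, in the proof of Lemma~\ref{monotone quotient}, with $f$ the constant $P(c')$, not with $\varphi_B$ or ODE solutions as you suggest. There it reduces to the fact that a continuous $P$ with $D_+P\leq D^+P\leq0$ is non-increasing, which is exactly what your argument proves.
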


\begin{lem}[{\cite[Lemma 2.4]{Banaji-R22}}]\label{BR lemma 2.4}
Let $g:\mathbb{R}\to(0,1)$ be continuous and let $V\subset\mathbb{R}$ be open. Then $D^+g(x)\in[-g(x),1-g(x)]$ for all $x\in V$ if and only if $D_-g(x)\in[-g(x),1-g(x)]$ for all $x\in V$.
\end{lem}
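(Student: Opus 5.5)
This is Lemma 2.4 of Banaji and Rutar, so we may simply cite it. A direct proof follows the usual pattern that a bound on one Dini derivative on an open set transfers to the other via a comparison argument.

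\textbf{Forward direction.} Assume $D^+g(x)\in[-g(x),1-g(x)]$ for all $x\in V$, and fix $x_0\in V$. We want to show $D_-g(x_0)\le 1-g(x_0)$; the lower bound is symmetric. Suppose instead that $D_-g(x_0)>1-g(x_0)+2\eta$ for some $\eta>0$. Then for all small $\epsilon>0$,
\[
g(x_0)-g(x_0-\epsilon)>\epsilon\,(1-g(x_0)+\eta).
\]
Fix such an $\epsilon$ with $[x_0-\epsilon,x_0]\subset V$, and let $f$ solve $f'=1-f+\eta/2$ on $[x_0-\epsilon,x_0]$ with $f(x_0-\epsilon)=g(x_0-\epsilon)$.

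\textbf{Comparison.} At every point of this interval,
\[
D^+g\le 1-g<1-g+\eta/2 .
\]
A comparison argument in the spirit of Lemma~\ref{BR lemma 2.2} then shows $g\le f$ on the interval. Concretely, look at the first point where $g-f$ becomes positive: there $g=f$, so $D^+g\le 1-g<f'$, which contradicts $g-f$ becoming positive immediately after that point.

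\textbf{Contradiction.} By continuity of $g$, for small $\epsilon$ we have
\[
f(x_0)=g(x_0-\epsilon)+\epsilon\,(1-g(x_0)+\eta/2)+o(\epsilon).
\]
Hence $g(x_0)\le f(x_0)$ gives
\[
g(x_0)-g(x_0-\epsilon)\le \epsilon\,(1-g(x_0)+\eta/2)+o(\epsilon),
\]
contradicting the displayed strict inequality above once $\epsilon$ is small enough. The lower bound $D_-g(x_0)\ge -g(x_0)$ is proved the same way, using the ODE $f'=-f-\eta/2$ with the inequalities reversed. Running the comparison leftward instead handles the upper bound on $D^-$.

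\textbf{Converse.} Apply the same argument to $x\mapsto g(-x)$ on $-V$. This swaps right and left Dini derivatives and flips their signs, so it is not literally symmetric. The cleaner route is to run the same comparison directly in the other time direction: use the bound on $D_-g$ over the interval $[x_0,x_0+\epsilon]$ together with the left-derivative version of Lemma~\ref{BR lemma 2.2}.

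The main point needing care is the comparison step. It uses only the one-sided Dini bound throughout an interval, which is why the statement requires $V$ to be open. Continuity of $g$ is what lets us replace $g$ on the interval by its value at $x_0$, at the cost of an $o(1)$ error.
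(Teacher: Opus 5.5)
As a citation, your proposal matches the paper: it gives no proof of this lemma and simply refers to \cite[Lemma 2.4]{Banaji-R22}. Your supplementary direct argument, however, has one step that fails as written.

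\textbf{The upper bound is fine.} Set $x^*:=\sup\{x : g\le f \text{ on } [x_0-\epsilon,x]\}$. There you get $D^+(g-f)(x^*)\le -\eta/2<0$, and this forces $g<f$ on a whole right neighbourhood of $x^*$.

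\textbf{The lower bound does not follow ``the same way, with the inequalities reversed''.} Take $f'=-f-\eta/2$ and let $x^*$ be the first point where $g-f$ turns negative. You get $g(x^*)=f(x^*)$ and $D^+(g-f)(x^*)\ge \eta/2>0$. This only says that $g>f$ along \emph{some} sequence decreasing to $x^*$. That is perfectly compatible with $g<f$ along another sequence decreasing to $x^*$, which is all the definition of $x^*$ provides. A lower bound on an upper right Dini derivative gives no sign information on a full right neighbourhood. This $D^+$ versus $D_+$ asymmetry is exactly what the lemma is about, so it cannot be passed over.

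The repair is to choose the point differently. If $(g-f)(b)<0$, take $c:=\max\{x\in[x_0-\epsilon,b] : (g-f)(x)\ge 0\}$. Then $g-f<0$ on all of $(c,b]$, so every right difference quotient of $g-f$ at $c$ is negative. Hence $D^+(g-f)(c)\le 0$, contradicting $D^+(g-f)(c)\ge\eta/2$.

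\textbf{The converse is only one sentence.} It does work, but the natural point there is an extremum of $g-f$ on $[x_0,b]$. Such a point controls all left difference quotients and handles both bounds. Also, negating $D^+g(x_0)\le 1-g(x_0)$ only gives a sequence $\epsilon_n\downarrow 0$ rather than all small $\epsilon$; this still suffices.

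\textbf{A cleaner route avoids the perturbed ODEs and the delicate choice of crossing points.} Since $e^x$ is smooth and positive, the product rule holds for each one-sided Dini derivative:
$D^+\bigl(e^x(g-1)\bigr)=e^x(D^+g+g-1)$ and $D_-\bigl(e^x(g-1)\bigr)=e^x(D_-g+g-1)$. Likewise $D^+(e^xg)=e^x(D^+g+g)$ and $D_-(e^xg)=e^x(D_-g+g)$.

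So on each component interval $J$ of $V$:
\begin{itemize}
\item each upper bound ($D^+g\le 1-g$ or $D_-g\le 1-g$ on $J$) says that a Dini derivative of $e^x(g-1)$ is $\le 0$ on $J$;
\item each lower bound says that a Dini derivative of $e^xg$ is $\ge 0$ on $J$.
\end{itemize}

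A continuous function on an interval, one of whose four Dini derivatives has constant sign everywhere, is monotone. Hence both upper bounds are equivalent to $e^x(1-g)$ being non-decreasing on $J$. Both lower bounds are equivalent to $e^xg$ being non-decreasing on $J$. Monotonicity then gives the corresponding sign for all four Dini derivatives at once.

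This proves both directions simultaneously, and it uses the openness of $V$ only to work on intervals.
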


\subsection{An integrated form of the Dini condition}\label{4.2}

The following elementary observation is the key tool of this section. 
Condition \eqref{Dini bound} is a pointwise inequality for a Dini derivative, which is
awkward to manipulate directly. The following lemma replaces it, for
$\lambda=0$ and $\alpha=1$, by an equivalent global statement: the quotient
$Q_h(\theta)=h(\theta)/(\theta(1-h(\theta)))$ is non-increasing. This simple
reformulation is used repeatedly in the rest of the section. First, comparing
$Q_h$ with the constant quotient of $\varphi_B$ yields the lower bound
$h\ge\varphi_B$ and its rigidity (Proposition~\ref{lower bound rigidity}). In the proof of the main
Theorem~\ref{main functions}, this bound is what allows the countable part of the
construction to be added without increasing the dimension. Second, the
monotonicity of $Q_h$ is exactly what makes the mountains of Lemma~\ref{mountain} well
defined, and it ensures that their slopes are admissible. These mountains are
the building blocks of the set realizing a prescribed function in
Theorem~\ref{main functions}.

\begin{lem}\label{monotone quotient}
Let $h\in\mathcal{H}(0,1)$ with $0<h(1)<1$. Then $0<h(\theta)<1$ for every $\theta\in(0,1]$, and the function
\[
Q_h(\theta):=\frac{h(\theta)}{\theta\,(1-h(\theta))}
\]
is non-increasing on $(0,1]$.
\end{lem}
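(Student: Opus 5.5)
The plan is to pass to logarithms. On any interval where $0<h<1$ we can write
\[
\log Q_h(\theta)=\log h(\theta)-\log(1-h(\theta))-\log\theta ,
\]
and the Dini condition \eqref{Dini bound} with $\lambda=0,\alpha=1$ says that $D^+h\le h(1-h)/\theta$. Since $u\mapsto \log(u/(1-u))$ is $C^1$ on $(0,1)$ with derivative $1/(u(1-u))$, and $h$ is continuous on $(0,1]$, the chain rule for upper Dini derivatives (valid for a $C^1$ increasing outer function) gives
\[
D^+\bigl(\log Q_h\bigr)(\theta)=\frac{D^+h(\theta)}{h(\theta)(1-h(\theta))}-\frac1\theta\le 0 .
\]
A continuous function whose upper right Dini derivative is $\le 0$ everywhere on an open interval is non-increasing there. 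This can be obtained from Lemma~\ref{BR lemma 2.2}, comparing with a constant: take $g=\log Q_h$ and $f\equiv g(a)+\epsilon(\theta-a)$ and let $\epsilon\to0$. Equivalently, one can use the classical monotonicity theorem for Dini derivatives. Continuity at the endpoint $\theta=1$ then extends monotonicity to $(0,1]$.

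The first step, which I expect to be the main obstacle, is to show $0<h(\theta)<1$ on $(0,1]$.

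For $h<1$: $h$ is non-decreasing, so $h(\theta)\le h(1)<1$.

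For $h>0$: let $\theta_*=\inf\{\theta\in(0,1]: h>0 \text{ on } [\theta,1]\}$. By continuity and $h(1)>0$ we have $\theta_*<1$. Suppose $\theta_*>0$, so that $h(\theta_*)=0$. On $(\theta_*,1]$ we have $0<h<1$, so the argument above shows $Q_h$ is non-increasing there. Hence for $\theta\in(\theta_*,1]$,
\[
Q_h(\theta)\ge Q_h(1)=\frac{h(1)}{1-h(1)}>0 ,
\]
which gives $h(\theta)\ge \theta c/(1+\theta c)$ with $c=Q_h(1)$. Letting $\theta\downarrow\theta_*$ and using continuity yields $h(\theta_*)\ge \theta_* c/(1+\theta_* c)>0$, a contradiction. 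Therefore $\theta_*=0$, so $h>0$ on $(0,1]$.

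The comparison on the sub-interval avoids the degenerate point where $\log h$ blows up, which is exactly where care is needed. With positivity established on all of $(0,1]$, the monotonicity argument applies globally.
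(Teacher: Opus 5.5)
Your proposal is correct and follows essentially the same route as the paper. Both pass to $P(\theta)=\log\frac{h(\theta)}{1-h(\theta)}-\log\theta$, use the mean value theorem (your chain rule) to get $D^+P\le 0$, and deduce monotonicity from Lemma~\ref{BR lemma 2.2} against a constant; both then rule out a zero of $h$ at the boundary point of the zero set, where $P\ge P(1)$ (your explicit bound $h(\theta)\ge\theta c/(1+\theta c)$) contradicts $h=0$. The only differences are cosmetic: the $\epsilon(\theta-a)$ perturbation is unnecessary, since the lemma already allows $D^+f=0$, and you describe the boundary point as an infimum rather than as the maximum of the zero set.
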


\begin{proof}
Since $h$ is non-decreasing, $h(\theta)\leq h(1)<1$ for all $\theta$. Let $0<c<1$ be such that $h>0$ on $[c,1]$, and define on $[c,1]$ the continuous function
\[
P(\theta):=\log\frac{h(\theta)}{1-h(\theta)}-\log\theta .
\]
The map $v\mapsto \log (v/(1-v))$ is continuously differentiable on $(0,1)$ with derivative $1/(v(1-v))>0$. Since $h$ is continuous at each $\theta\in[c,1)$ and $0\leq D^+h(\theta)\leq h(\theta)(1-h(\theta))/\theta<\infty$, the mean value theorem gives
\[
D^+P(\theta)=\frac{D^+h(\theta)}{h(\theta)(1-h(\theta))}-\frac{1}{\theta}\leq 0\qquad\text{for all }\theta\in[c,1).
\]
Applying Lemma \ref{BR lemma 2.2} to $g=P$ and to the constant function $f\equiv P(c')$ on $[c',1]$, for any $c'\in[c,1)$, we obtain $P\leq P(c')$ on $[c',1]$. Hence $P$ is non-increasing on $[c,1]$.

Suppose now that $h(\theta_0)=0$ for some $\theta_0\in(0,1]$. Then $\theta_0<1$, and since $h$ is continuous on $(0,1]$, the set $\{\theta\in[\theta_0,1] : h(\theta)=0\}$ is compact and does not contain $1$. Let $\theta_1<1$ be its maximum. Then $h>0$ on $(\theta_1,1]$, so by the previous paragraph $P(\theta)\geq P(1)>-\infty$ for every $\theta\in(\theta_1,1]$. On the other hand $h(\theta)\to h(\theta_1)=0$ as $\theta\to\theta_1^+$, which gives $P(\theta)\to-\infty$, a contradiction. Therefore $h>0$ on $(0,1]$, the function $P$ is non-increasing on $(0,1]$, and so is $Q_h=\exp(P)$.
\end{proof}

\begin{prop}\label{lower bound rigidity}
Let $h\in\mathcal{H}(0,1)$ with $h(1)=B\in(0,1)$. Then the following hold.
\begin{enumerate}
\item[(i)] $h(\theta)\geq\varphi_B(\theta)$ for all $\theta\in[0,1]$.
\item[(ii)] If $h(\theta_0)=\varphi_B(\theta_0)$ for some $\theta_0\in(0,1]$, then $h(\theta)=\varphi_B(\theta)$ for all $\theta\in[\theta_0,1]$.
\end{enumerate}
\end{prop}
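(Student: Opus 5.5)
The plan is to compare $Q_h$ with the quotient of $\varphi_B$ and use Lemma~\ref{monotone quotient}. A direct computation gives
\[
\frac{\varphi_B(\theta)}{1-\varphi_B(\theta)}=\frac{\theta B}{1-(1-\theta)B-\theta B}=\frac{\theta B}{1-B},
\]
so $Q_{\varphi_B}(\theta)=B/(1-B)$ is constant. In particular, $\varphi_B$ is exactly the function in $\mathcal{H}(0,1)$ whose quotient stays constant, and it takes the value $B$ at $\theta=1$. Since $h(1)=B\in(0,1)$, Lemma~\ref{monotone quotient} applies: we have $0<h<1$ on $(0,1]$ and $Q_h$ is non-increasing there.

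For (i), take $\theta\in(0,1]$. Monotonicity of $Q_h$ gives $Q_h(\theta)\geq Q_h(1)=B/(1-B)=Q_{\varphi_B}(\theta)$. Multiplying by $\theta$, this becomes
\[
\frac{h(\theta)}{1-h(\theta)}\geq\frac{\varphi_B(\theta)}{1-\varphi_B(\theta)}.
\]
The map $v\mapsto v/(1-v)$ is strictly increasing on $[0,1)$, so $h(\theta)\geq\varphi_B(\theta)$. At $\theta=0$ we have $\varphi_B(0)=0\leq h(0)$, because $h$ takes values in $[0,1]$.

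For (ii), suppose $h(\theta_0)=\varphi_B(\theta_0)$ with $\theta_0\in(0,1]$. By the same strict monotonicity of $v\mapsto v/(1-v)$, this gives $Q_h(\theta_0)=B/(1-B)=Q_h(1)$. Since $Q_h$ is non-increasing, it is then constant on $[\theta_0,1]$, equal to $B/(1-B)$. Inverting $v/(1-v)=\theta B/(1-B)$ yields $h(\theta)=\varphi_B(\theta)$ on $[\theta_0,1]$.

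There is no real obstacle here. All the analytic work, namely the positivity of $h$ and the monotonicity of $Q_h$, is already contained in Lemma~\ref{monotone quotient}. The only things to check are the algebraic identity for $Q_{\varphi_B}$, which uses $1-B>0$ and $1-(1-\theta)B>0$, and the separate treatment of the endpoint $\theta=0$.
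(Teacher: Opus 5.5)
Your proposal is correct and follows essentially the same route as the paper: compute $Q_{\varphi_B}\equiv B/(1-B)$, use Lemma~\ref{monotone quotient} to get $Q_h(\theta)\geq Q_h(1)=Q_{\varphi_B}(\theta)$, and convert back via the strict monotonicity of $v\mapsto v/(1-v)$. For (ii), you also argue as the paper does: equality at $\theta_0$ pins the non-increasing function $Q_h$ to a constant on $[\theta_0,1]$, and you treat $\theta=0$ separately in the same way.
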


\begin{proof}
A direct computation shows that $\varphi_B(\theta)/(1-\varphi_B(\theta))=\theta B/(1-B)$, that is, $Q_{\varphi_B}(\theta)=B/(1-B)$ for all $\theta\in(0,1]$. By Lemma \ref{monotone quotient},
\[
Q_h(\theta)\geq Q_h(1)=\frac{B}{1-B}=Q_{\varphi_B}(\theta)\qquad\text{for all }\theta\in(0,1].
\]
Since $v\mapsto v/(1-v)$ is strictly increasing on $[0,1)$, the inequality $Q_h(\theta)\geq Q_{\varphi_B}(\theta)$ is equivalent to $h(\theta)\geq\varphi_B(\theta)$. For $\theta=0$ we simply have $h(0)\geq0=\varphi_B(0)$. This proves (i). If $h(\theta_0)=\varphi_B(\theta_0)$, then $Q_h(\theta_0)=Q_h(1)$, and since $Q_h$ is non-increasing, $Q_h$ is constant on $[\theta_0,1]$. Thus $Q_h=Q_{\varphi_B}$ on $[\theta_0,1]$, which means $h=\varphi_B$ on $[\theta_0,1]$, proving (ii).
\end{proof}

\begin{rmk}\label{rmk rigidity}
Part (i) of Proposition \ref{lower bound rigidity} recovers the lower bound $\uda D_a\leq\uda E$ of Lemma \ref{Maxmin}. Part (ii) shows that this lower bound is rigid. If the upper intermediate dimension of a complementary set coincides with $\varphi_B$ at a single $\theta_0\in(0,1]$, then it coincides with $\varphi_B=\overline{\dim}_{\,\theta} D_a$ on the whole interval $[\theta_0,1]$. The equality cannot be propagated to the left of $\theta_0$ in general. For instance, if $a$ satisfies \eqref{hyp H} and $s>1$, the proof of Theorem \ref{attainablevaluesangelini} provides $E\in\mathcal{C}_a$ with
\[
\uda E=h_s(\theta):=\max\Bigl\{\frac{\uda C_a}{s},\ \varphi_B(\theta)\Bigr\}\quad\text{for all }\theta\in[0,1].
\]
Since $\overline{\dim}_{\,1}C_a/s=B/s<B=\varphi_B(1)$ and both functions are continuous at $\theta=1$, the function $h_s$ coincides with $\varphi_B$ on a neighbourhood of $\theta=1$, while at $\theta=0$ it equals $\dimh C_a/s>0=\varphi_B(0)$.
\end{rmk}

The equality case in Proposition \ref{lower bound rigidity} also forces the Assouad and lower dimensions to be extremal.

\begin{prop}\label{extremal assouad}
Let $F\subset\mathbb{R}$ be a bounded set with $\ubd F=B\in(0,1)$. If $\overline{\dim}_{\,\theta_0}F=\varphi_B(\theta_0)$ for some $\theta_0\in(0,1)$, then $\dim_L F=0$ and $\dim_A F=1$.
\end{prop}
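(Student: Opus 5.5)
We argue by contradiction using Proposition~\ref{banajis bound} together with Theorem~\ref{BanajiRutar} and Remark~\ref{inclusion H}. Write $\lambda=\dim_L F$, $\alpha=\dim_A F$ and $h(\theta)=\uda F$. By the first part of Theorem~\ref{BanajiRutar}, $h\in\mathcal{H}(\lambda,\alpha)$. We also have $\lambda\le B\le\alpha$, and $h(1)=B$.

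First we dispose of the degenerate case $\lambda=\alpha$. Then $\mathcal{H}(\lambda,\alpha)$ consists only of constant functions, so $h\equiv B$. Since $0<\theta_0<1$ and $0<B<1$, we have $\varphi_B(\theta_0)<B$, which contradicts $h(\theta_0)=\varphi_B(\theta_0)$. Hence $\lambda<\alpha$.

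Now suppose $(\lambda,\alpha)\neq(0,1)$. By Proposition~\ref{lower bound rigidity}(ii), applied to $h\in\mathcal{H}(0,1)$, we get $h=\varphi_B$ on $[\theta_0,1]$. On $(\theta_0,1)$ the function $\varphi_B$ is differentiable, and its derivative saturates the $\mathcal{H}(0,1)$ Dini bound. Indeed, $Q_{\varphi_B}$ is constant, so
\[
\varphi_B'(\theta)=\frac{\varphi_B(\theta)(1-\varphi_B(\theta))}{\theta}.
\]
Moreover $0<\varphi_B(\theta)<1$ there. By the strict inequality noted in Remark~\ref{inclusion H}, the $\mathcal{H}(\lambda,\alpha)$ bound \eqref{Dini bound} is strictly smaller than $\varphi_B(\theta)(1-\varphi_B(\theta))/\theta=D^+h(\theta)$. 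This contradicts $h\in\mathcal{H}(\lambda,\alpha)$. Therefore $\lambda=0$ and $\alpha=1$.

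Alternatively, and as a check, one can use Proposition~\ref{banajis bound} directly. Its right-hand side, as a function of $(\lambda,\alpha)$, is at least $\varphi_B(\theta_0)$, with equality only when $\lambda=0$ and $\alpha=1$. Setting $\lambda=0$ and $\alpha=1$ gives exactly $\theta B/(\theta B+1-B)=\varphi_B(\theta)$. The remaining task is to show strict monotonicity in each variable, and this is a routine computation.

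The main point requiring care is the application of the strict inequality from Remark~\ref{inclusion H}. We need $D^+h(\theta)$ to equal the saturated value at some interior point. This holds because $h$ coincides with the smooth function $\varphi_B$ on a nondegenerate interval $[\theta_0,1]$ with $\theta_0<1$, and this is exactly where the hypothesis $\theta_0\in(0,1)$ enters. Any single point $\theta\in(\theta_0,1)$ suffices.
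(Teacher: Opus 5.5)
Your main argument is correct, and it takes a genuinely different route from the paper's. The paper never propagates the equality. It applies the explicit Banaji--Rutar bound (Proposition~\ref{banajis bound}) at the single point $\theta_0$. It then writes $\psi(\theta)=B-(1-\theta)uv/(v+\theta u)$ and $\varphi_B(\theta)=B-(1-\theta)B(1-B)/((1-B)+\theta B)$, with $u=B-\lambda$ and $v=\alpha-B$. The case $uv=0$ is disposed of directly, and otherwise the paper uses that $(u^{-1}+\theta_0 v^{-1})^{-1}$ is strictly increasing in $u$ and in $v$. This is exactly your ``alternative'' paragraph, carried out. You instead use only the membership $h\in\mathcal{H}(\lambda,\alpha)$ from Theorem~\ref{BanajiRutar}. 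Proposition~\ref{lower bound rigidity}(ii) upgrades the single equality to $h=\varphi_B$ on $[\theta_0,1]$. Then at any $\theta\in(\theta_0,1)$ the function $h$ saturates the $\mathcal{H}(0,1)$ Dini bound, and the strict inequality in Remark~\ref{inclusion H} rules this out unless $(\lambda,\alpha)=(0,1)$. Each step checks out: $D^+h(\theta)=\varphi_B'(\theta)$ because $h=\varphi_B$ to the right of $\theta$, $\varphi_B'=\varphi_B(1-\varphi_B)/\theta$, and $0<\varphi_B(\theta)<1$. Your route avoids the explicit bound and the algebra. It also proves a stronger qualitative fact: saturation of the $\mathcal{H}(0,1)$ bound at a single $\theta\in(0,1)$ with $0<h(\theta)<1$ already forces $\dim_L F=0$ and $\dim_A F=1$, which is the mechanism the paper itself uses in Remark~\ref{rmk why schemes}. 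The paper's route works pointwise at $\theta_0$ and does not depend on the rigidity proposition. It is also quantitative: when $(\lambda,\alpha)\neq(0,1)$ it gives the explicit positive lower bound $\psi(\theta)-\varphi_B(\theta)$ on the excess $\uda F-\varphi_B(\theta)$ for every $\theta\in(0,1)$. A minor point: your opening sentence says the argument uses Proposition~\ref{banajis bound}, but your main argument does not.
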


\begin{proof}
Write $\lambda=\dim_L F$ and $\alpha=\dim_A F$, so that $\lambda\leq B\leq\alpha$. If $\lambda=\alpha$, then $\theta\mapsto\uda F$ is constant by Theorem \ref{BanajiRutar}, hence equal to $B$, and $B>\varphi_B(\theta_0)$ because $0<B<1$ and $\theta_0<1$. Thus $\lambda<\alpha$, and Proposition \ref{banajis bound} gives $\varphi_B(\theta_0)=\overline{\dim}_{\,\theta_0}F\geq\psi(\theta_0)$, where
\[
\psi(\theta):=\frac{\theta\alpha(B-\lambda)+\lambda(\alpha-B)}{\theta(B-\lambda)+(\alpha-B)}.
\]
Set $u:=B-\lambda\in[0,B]$ and $v:=\alpha-B\in[0,1-B]$. A direct computation gives
\[
\psi(\theta)=B-\frac{(1-\theta)\,uv}{v+\theta u}
\qquad\text{and}\qquad
\varphi_B(\theta)=B-\frac{(1-\theta)\,B(1-B)}{(1-B)+\theta B}.
\]
If $uv=0$, then $\psi(\theta_0)=B>\varphi_B(\theta_0)$, which is impossible. If $uv>0$, then $uv/(v+\theta_0 u)=(u^{-1}+\theta_0v^{-1})^{-1}$ is strictly increasing in $u$ and in $v$, so $\psi(\theta_0)\geq\varphi_B(\theta_0)$ with equality if and only if $u=B$ and $v=1-B$. Hence $\lambda=0$ and $\alpha=1$.
\end{proof}

\subsection{Homogeneous Cantor schemes}\label{4.3}

The set realizing a prescribed function in Theorem~\ref{main functions} cannot, in general,
be a decreasing Cantor set (see Remark~\ref{rmk why schemes}). We therefore work with a more
flexible class of sets. These are limit sets of binary Cantor schemes in
which the intervals of a given level have comparable lengths, which need not
be equal. Their gaps also need not be arranged in decreasing order. The main
result of this subsection, Lemma~\ref{dimension scheme}, shows that for such sets the upper
intermediate dimensions depend only on the sequence of scales $\rho$. They
are given by window infima of the function $S_\rho$ defined below. This reduces the
construction of Theorem~\ref{main functions} to two tasks: choosing a suitable sequence of
scales, and placing terms of $a$ as gaps so that the resulting scheme is
homogeneous with those scales.

Let $\Sigma^*=\bigcup_{k\geq0}\{1,2\}^k$ be the set of finite words over the alphabet $\{1,2\}$, including the empty word $\varnothing$, and write $|\sigma|$ for the length of $\sigma$. We write $\tau\succeq\sigma$ if $\sigma$ is a prefix of $\tau$ (including $\tau=\sigma$).

\begin{defi}\label{def scheme}
A family $\{I_\sigma\}_{\sigma\in\Sigma^*}$ of non-degenerate closed intervals is a \emph{binary Cantor scheme} if for every $\sigma\in\Sigma^*$ the intervals $I_{\sigma1}$ and $I_{\sigma2}$ are disjoint and contained in $I_\sigma$, and $\max_{|\sigma|=k}|I_\sigma|\to0$ as $k\to\infty$. Its \emph{limit set} is
\[
F=\bigcap_{k\geq0}\ \bigcup_{|\sigma|=k}I_\sigma .
\]
Let $\rho=(\rho_k)_{k\geq0}$ be a strictly decreasing sequence converging to $0$ with $\rho_0\leq1$, and let $K\geq1$. The scheme is \emph{$(\rho,K)$-homogeneous} if $K^{-1}\rho_k\leq|I_\sigma|\leq K\rho_k$ for all $\sigma\in\Sigma^*$ with $|\sigma|=k$ and all $k\geq0$.
\end{defi}

For instance, the closed intervals $I_{k,j}$ used in Section \ref{Comp sets} to construct $C_a$ form, after the obvious relabelling by words, a binary Cantor scheme with limit set $C_a$, and $s_{k+1}(a)\leq|I_{k,j}|\leq s_{k-1}(a)$. Therefore, if $a$ satisfies \eqref{hyp H} and $C\geq1$, this scheme is $(\rho,K)$-homogeneous with $\rho_k=C^{-1}s_k(a)$ and $K=C\kappa_2$.

For a strictly decreasing sequence $\rho$ as in Definition \ref{def scheme} we will also use the function
\[
S_\rho(x):=s_\rho\bigl(\exp(-\exp(x))\bigr),\qquad x>\log\log(1/\rho_0).
\]
Since $\phi=\exp(-\exp(y))$ is a decreasing bijection, $\phi\in[\delta^{1/\theta},\delta]$ if and only if $y\in[x,x+\log(1/\theta)]$, where $x=\log\log(1/\delta)$. The next lemma is essentially \cite[Lemma 2.10 and Proposition 2.11]{Banaji-R22}, where it is proved for homogeneous Moran sets. We include the proof since in our setting the intervals of a fixed level need not have the same length.

\begin{lem}\label{dimension scheme}
Let $F$ be the limit set of a $(\rho,K)$-homogeneous binary Cantor scheme. Then for all $\theta\in(0,1]$,
\begin{equation}\label{formula scheme}
\uda F=\limsup_{\delta\to0}\ \inf_{\phi\in[\delta^{1/\theta},\delta]}s_\rho(\phi)
=\limsup_{x\to\infty}\ \inf_{y\in[x,x+\log(1/\theta)]}S_\rho(y),
\end{equation}
and
\begin{equation}\label{formula scheme H}
\dimh F=\lbd F=\liminf_{\phi\to0}s_\rho(\phi)=\liminf_{x\to\infty}S_\rho(x).
\end{equation}
\end{lem}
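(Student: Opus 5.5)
The plan is to reduce everything to counting estimates at the level of the scheme and then follow the argument of \cite[Lemma 2.10 and Proposition 2.11]{Banaji-R22}, absorbing the multiplicative constant $K$ into the remark after Definition~\ref{adef}. The second equalities in \eqref{formula scheme} and \eqref{formula scheme H} are just the change of variables $\phi=\exp(-\exp(y))$ noted before the statement, so only the first equalities need proof.

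\emph{Step 1: covering numbers at a single scale.} For $\phi$ small, let $k=k_\rho(\phi)$, so that $\rho_k\leq\phi<\rho_{k-1}$. The $2^k$ intervals of level $k$ cover $F$ and have length at most $K\rho_k\leq K\phi$. Conversely, any set $U$ with $|U|\leq\phi$ meets only boundedly many level-$(k-1)$ intervals whose points lie in $F$. To see this, note that distinct level-$(k-1)$ intervals are disjoint, each has length at least $K^{-1}\rho_{k-1}>K^{-1}\phi$, and each contains points of $F$ at both ends, since the endpoints of $I_\sigma$ are limits of the subintervals. Hence $U$ meets at most $2K+2$ of them. This gives
\[
N_\phi(F)\asymp 2^{k_\rho(\phi)}=\phi^{-s_\rho(\phi)},
\]
uniformly up to constants depending only on $K$. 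Taking liminf and limsup proves $\lbd F=\liminf s_\rho$.

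\emph{Step 2: the mass distribution.} Let $\mu$ be the natural measure assigning $2^{-k}$ to each $I_\sigma$ with $|\sigma|=k$. By the argument of Step 1, any set $U$ with $|U|=\phi$ satisfies
\[
\mu(U)\lesssim 2^{-k_\rho(\phi)}=\phi^{s_\rho(\phi)}.
\]

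\emph{Step 3: the intermediate dimension formula.} For the upper bound, fix $\delta$ and choose $\phi\in[\delta^{1/\theta},\delta]$ where $s_\rho(\phi)$ is close to its infimum over the window. Covering $F$ by the level-$k_\rho(\phi)$ intervals, which are admissible up to the constant $K$, gives
\[
\sum|U_i|^s\lesssim 2^{k}\phi^{s}=\phi^{s-s_\rho(\phi)}.
\]
This is small whenever $s$ exceeds $\limsup_\delta\inf_{\text{window}}s_\rho$ plus a margin. For the lower bound, take any cover $\{U_i\}$ with $\delta^{1/\theta}\leq|U_i|\leq\delta$. By Step 2 and $1\leq\sum\mu(U_i)$,
\[
\sum|U_i|^{s}\gtrsim\sum|U_i|^{s-s_\rho(|U_i|)}\mu(U_i)\geq\min_i|U_i|^{s-s_\rho(|U_i|)} .
\]
If $s<\inf_{\text{window}}s_\rho-\eta$ along a sequence $\delta\to0$, this is at least $1$, contradicting $\sum|U_i|^s\leq\epsilon$. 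This is the mass distribution principle in the form of \cite[Proposition 2.11]{Banaji-R22}.

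\emph{Step 4: Hausdorff dimension.} Use the same computation with $\theta\to0$: the mass distribution principle gives $\dimh F\geq\liminf s_\rho$, and Step 1 gives $\dimh F\leq\lbd F$.

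The main obstacle is Step 1's lower estimate, namely controlling how many level-$k$ pieces a short set can meet when the level intervals are only comparable in length rather than equal. It is exactly here that the two-sided $K$-bound of homogeneity is needed; separation of the pieces is not required.
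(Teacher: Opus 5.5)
Your route is the paper's: the measure with $\mu(I_\sigma)=2^{-|\sigma|}$, the count of level-$(k-1)$ intervals met by a short set, and covers by level-$k_\rho(\phi)$ intervals. Your lower bound for $\uda F$, the mass distribution argument for $\dimh F$, and the estimate $N_\phi(F)\asymp 2^{k_\rho(\phi)}$ for $\lbd F$ are fine. One step, however, is not correct as stated.

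In the upper bound of Step 3 you say the level-$k$ intervals, $k=k_\rho(\phi)$, are admissible up to the constant $K$. This is false in general.
\begin{itemize}
\item They satisfy $|I_\sigma|\leq K\rho_k\leq K\phi$, but from below you only have $|I_\sigma|\geq K^{-1}\rho_k$. Nothing bounds $\rho_k/\phi$ from below, since the ratios $\rho_k/\rho_{k-1}$ may tend to $0$. This is exactly what happens for the combined scales in Step 4 of Theorem~\ref{main functions}.
\item For fixed $k$, $s_\rho$ is increasing on $[\rho_k,\rho_{k-1})$. So the infimum of $s_\rho$ over $[\delta^{1/\theta},\delta]$ is attained either at some $\rho_j$ in the window or at the left endpoint $\delta^{1/\theta}$.
\item In the second case the level-$k$ intervals can be far shorter than $\delta^{1/\theta}$, and your cover is not admissible.
\end{itemize}
The repair is easy. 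Replace each level-$k$ interval by a set of diameter in $[\phi,K\phi]$ containing it, or, as the paper does, cover it by $\lceil K\rceil$ intervals of length exactly $\phi$. The cost is still $\lesssim 2^k\phi^s=\phi^{s-s_\rho(\phi)}$, which is the estimate you wrote.

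Separately, your claim in Step 1 that every $I_\sigma$ has points of $F$ at both ends is false for a general binary Cantor scheme. Definition~\ref{def scheme} does not require the children to contain the endpoints of the parent. You do not need this claim. If $U$ meets $N$ of the disjoint level-$(k-1)$ intervals, at least $N-2$ of them lie in the convex hull of $U$. This gives $N<K+2$, or $N<2K+2$ for balls of radius $\phi$, using only disjointness and the lower length bound.
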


\begin{proof}
The second equalities in \eqref{formula scheme} and \eqref{formula scheme H} follow from the change of variables described above, so we only prove the first ones. Let $\mu$ be the image of the uniform Bernoulli measure on $\{1,2\}^{\mathbb{N}}$ under the coding map which sends $\omega$ to the unique point of $\bigcap_k I_{\omega|k}$. Since the intervals of each level are pairwise disjoint, $\mu(I_\sigma)=2^{-|\sigma|}$ for every $\sigma$, and $\mu(F)=1$.

We first prove two estimates. Let $U\subset\mathbb{R}$ with $0<|U|<\rho_0$ and let $k=k_\rho(|U|)\geq1$, so that $\rho_k\leq|U|<\rho_{k-1}$. The intervals $I_\sigma$ with $|\sigma|=k-1$ are pairwise disjoint and have length at least $K^{-1}\rho_{k-1}>K^{-1}|U|$. If $U$ intersects $N$ of them, at least $N-2$ of them are contained in the convex hull of $U$, so $(N-2)K^{-1}\rho_{k-1}\leq|U|<\rho_{k-1}$, that is, $N<K+2$. Since $F\subset\bigcup_{|\sigma|=k-1}I_\sigma$, we obtain
\begin{equation}\label{mass estimate}
\mu(U)\leq (K+2)\,2^{-(k-1)}=2(K+2)\,|U|^{s_\rho(|U|)},
\end{equation}
where we used that $|U|^{s_\rho(|U|)}=\exp(-k\log2)=2^{-k}$ by \eqref{def s rho}. On the other hand, if $\phi\in(0,\rho_0)$ and $k=k_\rho(\phi)$, the $2^k$ intervals $I_\sigma$ with $|\sigma|=k$ cover $F$ and have length at most $K\rho_k\leq K\phi$. Covering each of them by $\lceil K\rceil$ intervals of length $\phi$ we obtain a cover $\mathcal{V}_\phi$ of $F$ by $\lceil K\rceil 2^k$ sets of diameter $\phi$, and for every $t\geq0$,
\begin{equation}\label{cover estimate}
\sum_{V\in\mathcal{V}_\phi}|V|^t=\lceil K\rceil\,2^k\phi^t=\lceil K\rceil\,\phi^{\,t-s_\rho(\phi)} .
\end{equation}

Fix $\theta\in(0,1]$ and let $d$ denote the $\limsup$ in \eqref{formula scheme}. Let $t>t'>d$. There is $\delta_0\in(0,\rho_0)$ such that for all $\delta\in(0,\delta_0)$ there exists $\phi\in[\delta^{1/\theta},\delta]$ with $s_\rho(\phi)<t'$. By \eqref{cover estimate}, $\mathcal{V}_\phi$ is a cover of $F$ by sets with diameters in $[\delta^{1/\theta},\delta]$ and
\[
\sum_{V\in\mathcal{V}_\phi}|V|^t=\lceil K\rceil\,\phi^{\,t-s_\rho(\phi)}\leq\lceil K\rceil\,\delta^{\,t-t'}\xrightarrow[\delta\to0]{}0 .
\]
Hence $\uda F\leq t$, and letting $t\to d$ gives $\uda F\leq d$.

Conversely, suppose $d>0$ and let $0<t<d$. There exist $\delta_n\to0$ with $\delta_n<\rho_0$ such that $s_\rho(\phi)>t$ for all $\phi\in[\delta_n^{1/\theta},\delta_n]$. Let $\{U_i\}$ be any cover of $F$ with $\delta_n^{1/\theta}\leq|U_i|\leq\delta_n$, where we may assume that each $U_i$ intersects $F$. By \eqref{mass estimate}, and since $|U_i|<1$,
\[
1=\mu(F)\leq\sum_i\mu(U_i)\leq2(K+2)\sum_i|U_i|^{s_\rho(|U_i|)}\leq2(K+2)\sum_i|U_i|^{t}.
\]
Thus every such cover has $t$-cost at least $1/(2(K+2))$, for arbitrarily small $\delta_n$, and by Definition \ref{adef} we get $\uda F\geq t$. Letting $t\to d$ proves \eqref{formula scheme}.

Finally, if $t<\liminf_{\phi\to0}s_\rho(\phi)$, then \eqref{mass estimate} gives $\mu(U)\leq2(K+2)|U|^t$ for every set $U$ of sufficiently small diameter, and the mass distribution principle (see \cite{FBook}) yields $\dimh F\geq t$. On the other hand, \eqref{cover estimate} shows that $F$ can be covered by $\lceil K\rceil2^{k_\rho(\phi)}$ sets of diameter $\phi$, so
\[
\lbd F\leq\liminf_{\phi\to0}\frac{\log\bigl(\lceil K\rceil2^{k_\rho(\phi)}\bigr)}{-\log\phi}=\liminf_{\phi\to0}s_\rho(\phi).
\]
Since $\dimh F\leq\lbd F$, \eqref{formula scheme H} follows.
\end{proof}

\begin{rmk}\label{rmk formula Ca}
If $a$ satisfies \eqref{hyp H}, Lemma \ref{dimension scheme} applied to the scheme defining $C_a$ gives an alternative expression for $\uda C_a$, namely
\[
\uda C_a=\limsup_{x\to\infty}\ \inf_{y\in[x,x+\log(1/\theta)]}S_{\rho}(y),\qquad \rho_k=C^{-1}s_k(a),
\]
for any constant $C\geq1$, which complements Theorem \cite[Theorem 4.2]{angelini2026intermediate}.
\end{rmk}

We also record some elementary consequences of \eqref{hyp H}.

\begin{lem}\label{consequences H}
Let $a$ be a positive, non-increasing sequence with $\sum a_n=1$ satisfying \eqref{hyp H}. Then the following hold.
\begin{enumerate}
\item[(a)] $(1-2/\kappa_1)\,s_k(a)\leq a_{2^k}\leq s_{k-1}(a)\leq\kappa_2\,s_k(a)$ for all $k\geq1$.
\item[(b)] $a_{2^k}\leq c_a\,a_{2^{k+1}}$ for all $k\geq1$, where $c_a:=\kappa_1\kappa_2^2/(\kappa_1-2)$.
\item[(c)] The Cantor set $C_a$ satisfies
\[
0<\frac{\log2}{\log\kappa_2}\leq\dim_L C_a\leq\dimh C_a\leq B\leq\dim_A C_a\leq\frac{\log2}{\log\kappa_1}<1 .
\]
\end{enumerate}
\end{lem}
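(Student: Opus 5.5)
For (a) and (b) I would argue directly from the monotonicity of $a$ and the definition of $s_k=2^{-k}x_{2^k}$. Part (c) then follows from (a) together with the Assouad formulas of García, Hare and Mendivil.

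For (a), start from $x_{2^k}=2^k s_k$. Write $x_{2^k}-x_{2^{k+1}}=\sum_{i=2^k}^{2^{k+1}-1}a_i$, a block of $2^k$ terms. Since $a$ is non-increasing, this block is at most $2^k a_{2^k}$. Hence
\[
2^k s_k-2^{k+1}s_{k+1}\leq 2^k a_{2^k}, \quad\text{i.e.}\quad a_{2^k}\geq s_k-2s_{k+1}\geq (1-2/\kappa_1)s_k,
\]
using $s_{k+1}\leq s_k/\kappa_1$. For the upper bound, the block $\sum_{i=2^{k-1}}^{2^k-1}a_i$ has $2^{k-1}$ terms, each at least $a_{2^k}$. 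Thus $2^{k-1}a_{2^k}\leq x_{2^{k-1}}=2^{k-1}s_{k-1}$, which gives $a_{2^k}\leq s_{k-1}$. Finally $s_{k-1}\leq\kappa_2 s_k$ by the definition of $\kappa_2$.

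For (b), I would chain the bounds in (a). Applying (a) at $k$ gives $a_{2^k}\leq\kappa_2 s_k\leq\kappa_2^2 s_{k+1}$. Applying (a) at $k+1$ gives $s_{k+1}\leq a_{2^{k+1}}/(1-2/\kappa_1)=\kappa_1 a_{2^{k+1}}/(\kappa_1-2)$. Combining the two yields $a_{2^k}\leq c_a a_{2^{k+1}}$ with exactly the stated constant.

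For (c), use the formulas $\dim_A C_a=\limsup_n\sup_k n\log2/\log(s_k/s_{k+n})$ and $\dim_L C_a=\liminf_n\inf_k n\log2/\log(s_k/s_{k+n})$. Telescoping the ratios gives $\kappa_1^n\leq s_k/s_{k+n}\leq\kappa_2^n$ for all $k,n$. Therefore every quotient $n\log2/\log(s_k/s_{k+n})$ lies in $[\log2/\log\kappa_2,\log2/\log\kappa_1]$, and this gives the outer bounds. The inner chain $\dim_L\leq\dimh\leq\ubd\leq\dim_A$ is the general inequality recalled in Section 2, with $B=\ubd C_a$. Strictness at the ends comes from $\kappa_2<\infty$, which gives positivity, and $\kappa_1>2$, which gives $\log2/\log\kappa_1<1$.

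I expect no real obstacle here. The only care needed is the index bookkeeping in (a), namely which block of indices is compared with $a_{2^k}$, and checking that $1-2/\kappa_1>0$ is exactly where $\kappa_1>2$ is used.
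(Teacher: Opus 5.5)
Your proposal is correct and follows the paper's proof essentially line by line. You use the same block comparisons for (a), the same chain $a_{2^k}\leq\kappa_2 s_k\leq\kappa_2^2 s_{k+1}\leq\frac{\kappa_1\kappa_2^2}{\kappa_1-2}a_{2^{k+1}}$ for (b), and for (c) the same bounds $\kappa_1^n\leq s_k/s_{k+n}\leq\kappa_2^n$ inserted into the Garc\'ia--Hare--Mendivil formulas together with the standard chain of dimension inequalities.
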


\begin{proof}
Write $s_k=s_k(a)$. Since $a$ is non-increasing and $s_k=2^{-k}\sum_{i=2^k}^{2^{k+1}-1}a_i+2s_{k+1}$, we have
\[
a_{2^k}\geq2^{-k}\sum_{i=2^k}^{2^{k+1}-1}a_i=s_k-2s_{k+1}\geq\Bigl(1-\frac{2}{\kappa_1}\Bigr)s_k .
\]
Moreover $a_{2^k}\leq a_i$ for $2^{k-1}\leq i<2^k$, so $a_{2^k}\leq2^{-(k-1)}\sum_{i=2^{k-1}}^{2^k-1}a_i\leq s_{k-1}\leq\kappa_2s_k$. This proves (a). By (a),
\[
a_{2^k}\leq\kappa_2s_k\leq\kappa_2^2s_{k+1}\leq\frac{\kappa_2^2}{1-2/\kappa_1}\,a_{2^{k+1}}=c_a\,a_{2^{k+1}},
\]
which is (b). Finally, \eqref{hyp H} gives $\kappa_1^n\leq s_k/s_{k+n}\leq\kappa_2^n$ for all $k$ and $n$, so every quotient $n\log2/\log(s_k/s_{k+n})$ lies in $[\log2/\log\kappa_2,\log2/\log\kappa_1]$. Part (c) now follows from the formulas for $\dim_A C_a$ and $\dim_L C_a$ recalled in Section \ref{Comp sets} and from the inequalities $\dim_L\leq\dimh\leq\lbd\leq\ubd\leq\dim_A$.
\end{proof}

\subsection{Mountains}\label{4.4}

By Lemma~\ref{dimension scheme}, prescribing the upper intermediate dimensions of a set
amounts to constructing a function $G$, playing the role of $S_\rho$, whose
window infima over intervals of length $\log(1/\theta)$ reproduce $h(\theta)$. In addition, the function must belong to the class $\mathcal{G}(0,1)$; that
is, its Dini derivatives must lie between $-G$ and $1-G$. These are the
slopes that a function of the form $S_\rho$ can have. This condition allows
Lemma~\ref{BR lemma 3.4} to produce a sequence of scales $\rho$ with $S_\rho$ close to the
constructed function. The basic building block of this
function is a mountain associated with $h$ and a parameter $\varepsilon>0$.
This is a continuous function with admissible slopes which starts and ends
at $h(\varepsilon)$. For every $\theta\in[\varepsilon,1]$, it contains a
window of length $\log(1/\theta)$ on which it stays above $h(\theta)$, while
every such window meets a point where it is at most $h(\theta)$. In the
proof of Theorem~\ref{main functions}, infinitely many mountains, with
$\varepsilon\to 0$, are joined by valleys to produce the required function.

The next lemma is a version of the \emph{mountains} of \cite[Theorem 3.9]{Banaji-R22} in the case $\lambda=0$ and $\alpha=1$. We present it in a slightly different way, based on Lemma \ref{monotone quotient}, which makes all the required properties transparent.

\begin{lem}\label{mountain}
Let $h\in\mathcal{H}(0,1)$ with $h(1)=B\in(0,1)$, let $\epsilon\in(0,1)$ and write $\ell_\epsilon:=\log(1/\epsilon)$. There exists a continuous function $f=f_{h,\epsilon}:[0,\ell_\epsilon]\to[h(\epsilon),B]$ with the following properties.
\begin{enumerate}
\item[(M1)] $f(0)=f(\ell_\epsilon)=h(\epsilon)$ and $\max_{[0,\ell_\epsilon]}f=B$.
\item[(M2)] $D_-f(y)\in[-f(y),\,1-f(y)]$ for every $y\in(0,\ell_\epsilon]$.
\item[(M3)] For every $\theta\in[\epsilon,1]$ there exists $x_\theta\in[0,\ell_\epsilon-\log(1/\theta)]$ such that $f(y)\geq h(\theta)$ for all $y\in[x_\theta,x_\theta+\log(1/\theta)]$.
\item[(M4)] For every $\theta\in[\epsilon,1]$ and every $y\in[0,\ell_\epsilon-\log(1/\theta)]$ there exists $z\in[y,y+\log(1/\theta)]$ with $f(z)\leq h(\theta)$.
\end{enumerate}
\end{lem}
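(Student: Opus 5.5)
The plan is to build $f$ explicitly as an ascending branch followed by a descending branch. The ascending branch has the maximal slope $1-f$. The descending branch is chosen so that, for each $\theta\in[\epsilon,1]$, the superlevel set $\{f\geq h(\theta)\}$ is an interval of length exactly $\log(1/\theta)$. By Lemma \ref{monotone quotient} we have $0<h<1$ on $(0,1]$ and $Q_h$ non-increasing; these two facts drive the whole construction. Write $\ell=\ell_\epsilon$.

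\emph{Ascending branch.} Let $U$ solve $U'=1-U$ with $U(0)=h(\epsilon)$, that is, $1-U(y)=(1-h(\epsilon))e^{-y}$. For $t\in[h(\epsilon),B]$, let $y_1(t)=\log\frac{1-h(\epsilon)}{1-t}$ be the time at which $U$ reaches $t$. Set $f=U$ on $[0,y_1(B)]$; this branch is increasing from $h(\epsilon)$ to $B$.

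\emph{Descending branch.} For $\theta\in[\epsilon,1]$ put
\[
p(\theta):=y_1(h(\theta))+\log(1/\theta)=\log\frac{1-h(\epsilon)}{\theta(1-h(\theta))}=\log(1-h(\epsilon))+\log Q_h(\theta)-\log h(\theta).
\]
Then $p(\epsilon)=\ell$ and $p(1)=y_1(B)$. Since $Q_h$ is non-increasing and $h$ is non-decreasing, $p$ is non-increasing.

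It is in fact strictly decreasing. If it were constant on an interval, then $\theta(1-h(\theta))$ would be constant there, so $h=1-c/\theta$. This gives $D^+h=(1-h)/\theta>h(1-h)/\theta$ because $h<1$, which contradicts \eqref{Dini bound}. Hence $p$ is a continuous decreasing bijection $[\epsilon,1]\to[y_1(B),\ell]$, and I define $f(y)=h(p^{-1}(y))$ on $[y_1(B),\ell]$. Then $f$ is continuous, agrees with $U$ at $y_1(B)$ (value $B$), satisfies $f(\ell)=h(\epsilon)$, and takes values in $[h(\epsilon),B]$. This proves (M1).

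\emph{Slopes (M2).} On the ascending part the left derivative equals $1-f\in[-f,1-f]$, and the same holds at the junction point $y_1(B)$. On the descending part $f$ is non-increasing, so $D_-f\leq0\leq1-f$. The lower bound $D_-f\geq-f$ is equivalent to $y\mapsto\log f(y)+y$ being non-decreasing. At $y=p(\theta)$ this quantity equals $\log(1-h(\epsilon))+\log Q_h(\theta)$. As $y$ increases, $\theta$ decreases, so $Q_h(\theta)$ increases and the quantity is non-decreasing. Turning this monotonicity into the pointwise Dini inequality is routine. This computation is the heart of the proof: it is exactly where the monotonicity of $Q_h$ enters, and it is the step I expect to need the most care, together with the strict monotonicity of $p$ that makes $f$ well defined and continuous.

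\emph{Windows (M3), (M4).} Fix $\theta\in[\epsilon,1]$ and $t=h(\theta)$. By monotonicity of each branch, $f\geq t$ on $[y_1(t),p(\theta)]$, while $f\leq t$ on $[0,y_1(t)]\cup[p(\theta),\ell]$. By construction $p(\theta)-y_1(t)=\log(1/\theta)$. Taking $x_\theta=y_1(t)\geq0$ gives (M3), since $x_\theta+\log(1/\theta)=p(\theta)\leq\ell$. For (M4), suppose a window $[y,y+\log(1/\theta)]\subset[0,\ell]$ avoided the set $\{f\leq t\}$. Then it would lie inside the open interval $(y_1(t),p(\theta))$, whose length is exactly $\log(1/\theta)$, which is impossible.
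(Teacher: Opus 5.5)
Your proposal is correct and is the paper's construction: your $y_1(h(\theta))$ and $p(\theta)$ are the paper's $x_L(\theta)$ and $x_R(\theta)$, and your monotonicity of $y\mapsto\log f(y)+y$ on the descending branch is exactly the paper's slow-descent inequality $f(y')\leq f(y)e^{\,y-y'}$. The only minor difference is in proving that $p$ is strictly decreasing: the paper reads it off directly from $p(\theta)=\log(1-h(\epsilon))+\log\bigl(\theta^{-1}+Q_h(\theta)\bigr)$, whereas you argue by contradiction with the Dini bound, and both arguments are valid.
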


\begin{proof}
By Lemma \ref{monotone quotient}, $0<h<1$ on $(0,1]$ and $Q_h$ is non-increasing. For $\theta\in[\epsilon,1]$ define
\[
x_L(\theta):=\log\frac{1-h(\epsilon)}{1-h(\theta)}\qquad\text{and}\qquad x_R(\theta):=x_L(\theta)+\log\frac1\theta .
\]
The function $x_L$ is continuous and non-decreasing, with $x_L(\epsilon)=0$ and $x_L(1)=x^*:=\log\bigl((1-h(\epsilon))/(1-B)\bigr)$. Since $1/(1-h(\theta))=1+\theta Q_h(\theta)$, we can write
\[
x_R(\theta)=\log\bigl(1-h(\epsilon)\bigr)+\log\Bigl(\frac1\theta+Q_h(\theta)\Bigr),
\]
and since $Q_h$ is non-increasing while $1/\theta$ is strictly decreasing, $x_R$ is continuous and strictly decreasing on $[\epsilon,1]$, with $x_R(\epsilon)=\ell_\epsilon$ and $x_R(1)=x^*$. In particular $0\leq x^*\leq\ell_\epsilon$. Let $\tau:[x^*,\ell_\epsilon]\to[\epsilon,1]$ be the inverse of $x_R$, which is continuous and strictly decreasing, and define
\[
f(y):=
\begin{cases}
1-(1-h(\epsilon))\,e^{-y} & \text{if } y\in[0,x^*],\\[2pt]
h(\tau(y)) & \text{if } y\in[x^*,\ell_\epsilon].
\end{cases}
\]
Both expressions are equal to $B$ at $y=x^*$, so $f$ is continuous. On $[0,x^*]$ the function $f$ increases from $h(\epsilon)$ to $B$, and on $[x^*,\ell_\epsilon]$ it is non-increasing, from $B$ to $h(\tau(\ell_\epsilon))=h(\epsilon)$. This gives (M1) and shows that $f$ takes values in $[h(\epsilon),B]$. Moreover, for every $\theta\in[\epsilon,1]$,
\begin{equation}\label{mountain identity}
f(x_L(\theta))=h(\theta)=f(x_R(\theta)).
\end{equation}

To prove (M3), let $\theta\in[\epsilon,1]$ and put $x_\theta:=x_L(\theta)$, so that $x_\theta+\log(1/\theta)=x_R(\theta)\leq\ell_\epsilon$. If $y\in[x_L(\theta),x^*]$, then $f(y)\geq f(x_L(\theta))=h(\theta)$ because $f$ is increasing on $[0,x^*]$. If $y\in[x^*,x_R(\theta)]$, then $\tau(y)\geq\theta$ and so $f(y)=h(\tau(y))\geq h(\theta)$.

To prove (M4), let $\theta\in[\epsilon,1]$ and $y\in[0,\ell_\epsilon-\log(1/\theta)]$. If $y\leq x_L(\theta)$, then $f(y)\leq f(x_L(\theta))=h(\theta)$. Otherwise $z:=y+\log(1/\theta)$ satisfies $x^*\leq x_R(\theta)<z\leq\ell_\epsilon$, so $\tau(z)<\theta$ and $f(z)=h(\tau(z))\leq h(\theta)$. In both cases we have found a point of $[y,y+\log(1/\theta)]$ at which $f$ is at most $h(\theta)$.

It remains to prove (M2). For $y\in(0,x^*]$ we have $D_-f(y)=(1-h(\epsilon))e^{-y}=1-f(y)\in[-f(y),1-f(y)]$. On $(x^*,\ell_\epsilon]$ the function $f$ is non-increasing, so $D_-f(y)\leq0\leq1-f(y)$. For the lower bound we claim that
\begin{equation}\label{slow descent}
f(y')\leq f(y)\,e^{\,y-y'}\qquad\text{whenever } x^*\leq y'\leq y\leq\ell_\epsilon .
\end{equation}
Indeed, let $\theta=\tau(y)\leq\theta'=\tau(y')$. From the definition of $Q_h$ we get $h=Q_h/(\theta^{-1}+Q_h)$, hence
\[
f(y)=\frac{Q_h(\theta)}{\theta^{-1}+Q_h(\theta)},\qquad f(y')=\frac{Q_h(\theta')}{\theta'^{-1}+Q_h(\theta')},
\]
and
\[
e^{\,y-y'}=e^{\,x_R(\theta)-x_R(\theta')}=\frac{\theta^{-1}+Q_h(\theta)}{\theta'^{-1}+Q_h(\theta')}.
\]
Therefore $f(y)e^{\,y-y'}=Q_h(\theta)/(\theta'^{-1}+Q_h(\theta'))\geq Q_h(\theta')/(\theta'^{-1}+Q_h(\theta'))=f(y')$, since $Q_h(\theta)\geq Q_h(\theta')$. Now if $y\in(x^*,\ell_\epsilon]$ and $0<t<y-x^*$, then \eqref{slow descent} gives
\[
\frac{f(y)-f(y-t)}{t}\geq f(y)\,\frac{1-e^{t}}{t}\xrightarrow[t\to0^+]{}-f(y),
\]
so $D_-f(y)\geq-f(y)$. This completes the proof.
\end{proof}

\begin{rmk}\label{rmk mountain}
The left side of the mountain increases at the maximal rate allowed in $\mathcal{G}(0,1)$. By \eqref{slow descent}, the right side decreases at the maximal rate exactly on the intervals where $Q_h$ is constant, that is, where $h$ is a solution of the differential equation $h'=h(1-h)/\theta$. This is the case, for instance, on every interval where $h=\varphi_B$.
\end{rmk}

\subsection[Prescribing the upper intermediate dimensions]{Prescribing the upper intermediate dimensions of complementary sets}\label{4.5}

We can now state and prove the main result of this section.

\begin{theo}\label{main functions}
Let $a=\{a_n\}$ be a positive, non-increasing sequence with $\sum_n a_n=1$ such that
\[
2<\inf_n\frac{s_n(a)}{s_{n+1}(a)}\leq\sup_n\frac{s_n(a)}{s_{n+1}(a)}<\infty .
\]
Let $0\leq\lambda<\alpha\leq1$ and let $h\in\mathcal{H}(\lambda,\alpha)$ be such that
\[
h(\theta)\leq\uda C_a\ \ \text{for all }\theta\in[0,1]\qquad\text{and}\qquad h(1)=\ubd C_a .
\]
Then there exists $E\in\mathcal{C}_a$ such that $\uda E=h(\theta)$ for all $\theta\in[0,1]$.
\end{theo}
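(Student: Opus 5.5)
We will build $E=F\cup D'$, where $F$ is the limit set of a $(\rho,K)$-homogeneous binary Cantor scheme whose gaps form a subsequence $b$ of $a$, and $D'$ is a translated copy of $D_{a'}$, with $a'$ the non-increasing rearrangement of the unused terms, placed in a gap so that $E\in\mathcal{C}_a$.

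\emph{The countable part.} Since $\uda$ is finitely stable, $\uda E=\max\{\uda F,\uda D'\}$. By Theorem~\ref{dimDa} we have $\uda D'=\varphi_{B'}$ with $B'=\ubd D_{a'}\leq B$. Hence $\uda D'\leq\varphi_B\leq h$ by Proposition~\ref{lower bound rigidity}(i). It therefore suffices to arrange $\uda F=h(\theta)$ for all $\theta$, together with $\ubd E=B$, which is automatic for $E\in\mathcal{C}_a$. By Remark~\ref{inclusion H}, $h\in\mathcal{H}(0,1)$. The degenerate case $h\equiv B$, for instance, is handled by $C_a$ itself, or by reducing to the constant case, and we may assume $0<B<1$ by Lemma~\ref{consequences H}(c).

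\emph{Building the target function.} Following \cite[Theorem 3.9]{Banaji-R22}, we construct $\tilde g\in\mathcal{G}(0,1)$ as follows. Take $\epsilon_n\downarrow0$ and place translates of the mountains $f_{h,\epsilon_n}$ of Lemma~\ref{mountain} at positions $X_n\to\infty$. Between consecutive mountains we insert valleys: descend from $h(\epsilon_n)$ at slope $-g$, stay low for a long stretch, then ascend at slope $1-g$ to $h(\epsilon_{n+1})$. This is possible by (M2) and Lemma~\ref{BR lemma 2.4}. By (M3), (M4) and the long valleys, the window infima $\limsup_x\inf_{[x,x+\log(1/\theta)]}\tilde g$ equal $h(\theta)$ for $\theta\in(0,1]$.

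The new constraint is that the scales must be realizable by terms of $a$. Gaps at level $k$ must be comparable to terms $a_{m}$ of some size, and each of the $2^k$ gaps at level $k$ must use distinct terms. This forces $\rho_k\lesssim s_k(a)$, that is, $S_\rho\leq S_{s(a)}+o(1)$ pointwise. So we instead take
\[
G=\min\{\tilde g,\,S_{s(a)}\},
\]
suitably smoothed and still in $\mathcal{G}(0,1)$, since a minimum of admissible functions is admissible. We then place each mountain $n$ at a position $X_n$ where
\[
\inf_{[X_n,X_n+\ell_{\epsilon_n}]}S_{s(a)}\geq h(\epsilon_n)
\]
and $S_{s(a)}$ stays above the mountain on it. Such positions exist along a sequence because $h\leq\uda C_a=\limsup_x\inf_{\text{window}}S_{s(a)}$ by Remark~\ref{rmk formula Ca}. \textbf{This is the main obstacle.} The hypothesis only gives window infima of $S_{s(a)}$ at each fixed $\theta$, whereas we need $S_{s(a)}\geq f_{h,\epsilon_n}$ pointwise on a whole mountain at once. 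We would first try a compactness/diagonal argument over finitely many $\theta\in[\epsilon_n,1]$, using continuity of $h$ and the monotone shape of the mountain (increasing and then decreasing, each side controlled by one $\theta$). Failing that, we would replace the mountain by its minimum with $S_{s(a)}$ and check (M3) directly from the window infimum hypothesis. Valleys pose no problem since they only need to be small. Any residual scales are taken equal to those of $C_a$, that is $S_{s(a)}$, so that the $\limsup$ at $\theta=1$ gives $B$.

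\emph{Realization.} Lemma~\ref{BR lemma 3.4} produces ratios $r_j\in(0,1/2]$ with $S_{\mathbf r}$ within $\log2\,e^{-x}$ of $G$, so window infima are unchanged and $\uda F=h$ by Lemma~\ref{dimension scheme}; the $\theta=0$ case follows from \eqref{formula scheme H} and the valleys. It remains to realize a $(\rho,K)$-homogeneous scheme with gaps from $a$. Because $\rho_k\lesssim s_k(a)$ and, by Lemma~\ref{consequences H}(a),(b), the terms $a_{2^m}\asymp s_m$ are doubling-comparable, at level $k$ we pick $2^k$ unused terms of size $\asymp\rho_k$ from a dyadic block $\{a_i:2^m\leq i<2^{m+1}\}$ with $s_m\asymp\rho_k$. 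Such a block contains $2^m\geq2^k$ comparable terms, and distinct levels with the same $m$ occur only boundedly often since $\rho$ decreases at least geometrically. We then need $I_\sigma$ lengths $\asymp\rho_k$: the gap inserted must be a fixed fraction of $|I_\sigma|$, which holds when $r_{k+1}$ stays below $1/2$. Where $r_{k+1}\approx1/2$, the Assouad-1 situation, we use a tiny gap from far down in $a$. Termwise, this is the delicate bookkeeping, but it is routine given \eqref{hyp H}. Finally, all unused terms go to $D'$, and lengths sum to $1$ since $F$ has zero length.
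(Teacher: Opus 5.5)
Your proposal has a genuine gap at exactly the step you flagged as the main obstacle, and neither of your fallbacks closes it. The hypothesis $h\le\uda C_a=\limsup_x\inf_{[x,x+\log(1/\theta)]}S_{s(a)}$ gives, for each $\theta$ separately, windows on which $S_{s(a)}\gtrsim h(\theta)$. It says nothing about where these windows sit relative to one another.

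A translate of $f_{h,\epsilon_n}$ lying below $S_{s(a)}$ would need good windows $[x_L(\theta),x_R(\theta)]$ for all $\theta\in[\epsilon_n,1]$ at once, nested around the common peak $x^*$. In general no such configuration exists. For instance, $S_{s(a)}$ may come close to $B$ only on short bumps followed by a fast descent, while the long windows realizing $\uda C_a$ for small $\theta$ lie elsewhere at a lower level. Then $x^*$ must land on a bump, but for moderate $\theta$ the window $[x_L(\theta),x_R(\theta)]$ around it has $S_{s(a)}$-infimum below $h(\theta)$. So for $h=\uda C_a$, or a small perturbation of it, no position works. Checking (M3) for $\min\{f_{h,\epsilon_n},S_{s(a)}\}$, or a finite compactness argument inside one mountain, runs into the same simultaneity problem.

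The missing idea is that one mountain does not need to certify every $\theta$. The lower bound is a $\limsup$, so it suffices that each rational be certified by infinitely many mountains. The paper proceeds as follows:
\begin{enumerate}
\item[(1)] Enumerate $\mathbb{Q}\cap(0,1)$ as $(\theta_j)$ with every rational repeated infinitely often, and take $\epsilon_j\le\theta_j$.
\item[(2)] Translate the $j$-th mountain so that its single (M3)-window for $\theta_j$ coincides with a window on which $S_1\ge U(\theta_j)-1/j$. On that window both $S_1$ and $S_2\approx G$ are $\gtrsim h(\theta_j)$.
\item[(3)] The upper bound follows from (G2), since $S_\rho\le S_2$.
\item[(4)] The lower bound passes from rationals to all $\theta$ by monotonicity of the window-infimum function and continuity of $h$ on $(0,1]$.
\end{enumerate}
The paper also takes the minimum at the level of scales, $\rho_k=\min\{\rho^{(1)}_k,\rho^{(2)}_k\}$. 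This gives $s_\rho=\min\{s_{\rho^{(1)}},s_{\rho^{(2)}}\}$ exactly, and so avoids having to smooth $S_{s(a)}$, which has jumps and is not in $\mathcal{G}(0,1)$.

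Your realization step is also not routine as described. The gap splitting a level-$(k-1)$ interval has size about $\rho_{k-1}-2\rho_k$, not $\asymp\rho_k$. Moreover, the ratios $\rho_k/\rho_{k-1}$ may approach $0$ as well as $1/2$. When the ratio is near $0$, a top-down choice needs the gap to match $|I_\sigma|$ up to an additive error of order $\rho_k$, that is, with relative precision tending to $0$. Terms of $a$ can only be matched to a target size up to a bounded factor; for the middle third sequence every term is a power of $1/3$. So ``the gap is a fixed fraction of $|I_\sigma|$'' fails there.

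The paper instead builds the scheme bottom-up:
\begin{enumerate}
\item[(1)] Set target gaps $\eta_m=\rho'_{m-1}-2\rho'_m>0$ with $\rho'_m=\pi_m\rho_m$.
\item[(2)] Choose terms within a factor $c_a^2$ of $\eta_m$ from blocks $\Lambda_{p(m)}$ with $p(m)\ge m+1$. The count $\sum_{m<p}2^{m-1}<2^{p-1}$ shows enough terms remain in each block.
\item[(3)] Define $|I_\sigma|$ as the sum of all descendant gaps. The telescoping identity $\sum_{m>k}2^{m-k-1}\eta_m=\rho'_k$ then gives $(\rho,2c_a^2)$-homogeneity regardless of how close the ratios are to $0$ or $1/2$.
\end{enumerate}
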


Before giving the proof, let us describe the strategy. As in the proof of Theorem \ref{attainablevaluesangelini}, the set $E$ will be the union of a Cantor type set $M$, built with some of the terms of $a$, and a countable set $D'$, built with the remaining terms. The set $M$ will satisfy $\uda M=h(\theta)$, while $\uda D'\leq\varphi_B(\theta)\leq h(\theta)$ by Proposition \ref{lower bound rigidity}. To construct $M$ we follow the approach of \cite{Banaji-R22}. We concatenate mountains associated with $h$ and valleys to obtain a function $G\in\mathcal{G}(0,1)$ whose window infima reproduce $h$, and Lemma \ref{BR lemma 3.4} provides a sequence of scales $\rho^{(2)}$ adapted to $G$. Two new difficulties appear. First, since $M$ must be built with terms of $a$, the lengths of its intervals of level $k$ should not exceed, up to a constant factor, those of $C_a$ (see Remark \ref{rmk minimum}), while the hypothesis $h\leq\uda C_a$ only gives information about infima over windows. We overcome this by taking the minimum of $\rho^{(2)}$ and the scales of $C_a$, and by placing each mountain on a window where the covering numbers of $C_a$ are large. Second, the function $h$ may saturate \eqref{Dini bound} for $\lambda=0$ and $\alpha=1$, so the ratios of consecutive scales may approach $0$ and $1/2$. For this reason $M$ is not a decreasing Cantor set $C_b$ as in Theorem \ref{attainablevaluesangelini}, but the limit set of a homogeneous Cantor scheme in which the gaps are not arranged in decreasing order (see Remark \ref{rmk why schemes}).

\begin{proof}
\emph{Step 0. Preliminary reductions.} By Remark \ref{inclusion H}, $h\in\mathcal{H}(0,1)$. By Lemma \ref{consequences H}(c), $B=\ubd C_a\in(0,1)$, and then Lemma \ref{monotone quotient} and Proposition \ref{lower bound rigidity} give
\begin{equation}\label{h positive}
0<h(\theta)\leq B<1\quad\text{for }\theta\in(0,1]\qquad\text{and}\qquad h\geq\varphi_B\ \text{ on }[0,1].
\end{equation}
We write $U(\theta):=\uda C_a$ for $\theta\in[0,1]$, so that $U(0)=\dimh C_a$.

\emph{Step 1. The scales of $C_a$.} Let $\kappa_1,\kappa_2$ be as in \eqref{hyp H}, let $c_a$ be as in Lemma \ref{consequences H}, and set
\[
C:=\frac{\kappa_1\kappa_2}{\kappa_1-2}>1\qquad\text{and}\qquad \rho^{(1)}_k:=C^{-1}s_k(a)\quad(k\geq0),
\]
and write $S_1:=S_{\rho^{(1)}}$.
As observed after Definition \ref{def scheme}, $C_a$ is the limit set of a $(\rho^{(1)},C\kappa_2)$-homogeneous binary Cantor scheme. Hence Lemma \ref{dimension scheme} gives
\begin{equation}\label{U window}
\begin{split}
U(\theta)&=\limsup_{x\to\infty}\ \inf_{y\in[x,x+\log(1/\theta)]}S_1(y)\qquad\text{for }\theta\in(0,1],\\
U(0)&=\liminf_{x\to\infty}S_1(x).
\end{split}
\end{equation}

\emph{Step 2. Parameters.} Let $\{q_1,q_2,\dots\}$ be an enumeration of $\mathbb{Q}\cap(0,1)$ and let $(\theta_j)_{j\geq1}$ be the sequence
\[
q_1,\ q_1,q_2,\ q_1,q_2,q_3,\ q_1,q_2,q_3,q_4,\ \dots,
\]
in which every $q_i$ appears infinitely many times. Put
\[
\epsilon_j:=\min\Bigl\{\theta_1,\dots,\theta_j,\frac{1}{j+1}\Bigr\}.
\]
Then the sequence $(\epsilon_j)$ is non-increasing and converges to $0$, and $0<\epsilon_j\leq\theta_j<1$ for every $j$. By \eqref{h positive}, $h(\epsilon_j)>0$, and we define
\[
\beta_j:=\max\Bigl\{h(0),\ \frac{h(\epsilon_{j+1})}{j+1}\Bigr\}.
\]
Then $0<\beta_j\leq h(\epsilon_{j+1})\leq h(\epsilon_j)$ and $\beta_j\to h(0)$ as $j\to\infty$. For each $j$ let $f_j:=f_{h,\epsilon_j}$ be the mountain given by Lemma \ref{mountain}, defined on $[0,\ell_j]$ with $\ell_j:=\log(1/\epsilon_j)$, and let $x_j\in[0,\ell_j-\log(1/\theta_j)]$ be the point given by (M3) for $\theta=\theta_j\in[\epsilon_j,1]$.

For $j\geq1$ and $F\geq0$ we also define a \emph{valley} $v_{j,F}$ on $[0,T_j+F]$, where $T_j:=\Delta_j+\Delta'_j$ with
\[
\Delta_j:=\log\frac{h(\epsilon_j)}{\beta_j}\geq0\qquad\text{and}\qquad \Delta'_j:=\log\frac{1-\beta_j}{1-h(\epsilon_{j+1})}\geq0,
\]
by
\[
v_{j,F}(t):=
\begin{cases}
h(\epsilon_j)\,e^{-t} & \text{if } t\in[0,\Delta_j],\\
\beta_j & \text{if } t\in[\Delta_j,\Delta_j+F],\\
1-(1-\beta_j)\,e^{-(t-\Delta_j-F)} & \text{if } t\in[\Delta_j+F,\,T_j+F].
\end{cases}
\]
The function $v_{j,F}$ is continuous, $v_{j,F}(0)=h(\epsilon_j)$, $v_{j,F}(T_j+F)=h(\epsilon_{j+1})$, $\beta_j\leq v_{j,F}\leq h(\epsilon_j)$, and $D_-v_{j,F}(t)\in[-v_{j,F}(t),1-v_{j,F}(t)]$ for every $t\in(0,T_j+F]$.

\emph{Step 3. The function $G$.} Let $w_0$ be the constant given by Lemma \ref{BR lemma 3.4} for functions $\tilde g$ with $\tilde g(0)=h(\epsilon_1)\in(0,1)$. By \eqref{U window} we can choose inductively numbers $X_1<X_2<\cdots$ as follows. Let $X_1\geq w_0+x_1$ be such that
\[
\inf_{y\in[X_1,X_1+\log(1/\theta_1)]}S_1(y)\geq U(\theta_1)-1,
\]
and put $P_1:=X_1-x_1\geq w_0$ and $R_1:=P_1+\ell_1$. Once $P_j$ and $R_j=P_j+\ell_j$ have been defined, choose $X_{j+1}\geq R_j+T_j+x_{j+1}+1$ such that
\begin{equation}\label{good window}
\inf_{y\in[X_{j+1},X_{j+1}+\log(1/\theta_{j+1})]}S_1(y)\geq U(\theta_{j+1})-\frac{1}{j+1},
\end{equation}
and set $P_{j+1}:=X_{j+1}-x_{j+1}$, $F_j:=P_{j+1}-R_j-T_j\geq1$ and $R_{j+1}:=P_{j+1}+\ell_{j+1}$. Since $x_j\leq\ell_j$ we have $R_j\geq X_j$, and therefore $X_{j+1}\geq X_j+1$. In particular $P_1<R_1<P_2<R_2<\cdots$ and $P_j\to\infty$. Define $G:\mathbb{R}\to\mathbb{R}$ by
\[
G(x):=
\begin{cases}
h(\epsilon_1) & \text{if } x\leq P_1,\\
f_j(x-P_j) & \text{if } x\in[P_j,R_j],\ j\geq1,\\
v_{j,F_j}(x-R_j) & \text{if } x\in[R_j,P_{j+1}],\ j\geq1.
\end{cases}
\]
By (M1) and the properties of the valleys, $G$ is well defined and continuous, and $G$ takes values in $(0,1)$, since $0<\beta_j\leq G\leq B<1$ on $[R_j,P_{j+1}]$ and $h(\epsilon_j)\leq G\leq B$ on $[P_j,R_j]$. Moreover, by (M2) and the corresponding property of the valleys, $D_-G(x)\in[-G(x),1-G(x)]$ for every $x\in\mathbb{R}$, because the left Dini derivative at each point only depends on the piece of $G$ lying to its left. By Lemma \ref{BR lemma 2.4} with $V=\mathbb{R}$, we obtain $G\in\mathcal{G}(0,1)$.

We now collect three properties of $G$.

\begin{enumerate}
\item[(G1)] For every $j\geq1$, $G(y)\geq h(\theta_j)$ for all $y\in[X_j,X_j+\log(1/\theta_j)]$.
\item[(G2)] For every $\theta\in(0,1]$, $\displaystyle\limsup_{x\to\infty}\ \inf_{y\in[x,x+\log(1/\theta)]}G(y)\leq h(\theta)$.
\item[(G3)] $\displaystyle\liminf_{x\to\infty}G(x)=h(0)$.
\end{enumerate}

Property (G1) follows from (M3), since $[X_j,X_j+\log(1/\theta_j)]=P_j+[x_j,x_j+\log(1/\theta_j)]\subset[P_j,R_j]$. For (G2), let $\theta\in(0,1]$, let $x\geq P_1$ and let $W=[x,x+\log(1/\theta)]$. If $W\subset[P_j,R_j]$ for some $j$, then necessarily $\theta\geq\epsilon_j$, and (M4) gives $\inf_W G\leq h(\theta)$. Otherwise, since $W$ is an interval contained in $[P_1,\infty)=\bigcup_{j}\bigl([P_j,R_j]\cup[R_j,P_{j+1}]\bigr)$, it intersects some valley interval $[R_j,P_{j+1}]$ with $P_{j+1}\geq x$, on which $G\leq h(\epsilon_j)$. Let $j(x):=\min\{j : P_{j+1}\geq x\}$, so that $j(x)\to\infty$ as $x\to\infty$. Since $h(\epsilon_j)$ is non-increasing in $j$, in all cases
\[
\inf_{y\in W}G(y)\leq\max\{h(\theta),\,h(\epsilon_{j(x)})\}.
\]
As $j\to\infty$, $h(\epsilon_j)$ decreases to $\inf_{t\in(0,1]}h(t)\leq h(\theta)$, and (G2) follows. For (G3), note that on $[P_1,\infty)$ we have $G\geq\min\{h(\epsilon_j),\beta_j : j\geq1\}\geq h(0)$, while $G(R_j+\Delta_j)=\beta_j\to h(0)$ and $R_j\to\infty$.

Let $\tilde g(y):=G(y+w_0)$. Then $\tilde g\in\mathcal{G}(0,1)$, $\tilde g(0)=G(w_0)=h(\epsilon_1)$ because $w_0\leq P_1$, and $\tilde g$ is not rapidly decreasing, since by (M1) we have $\max_{[P_j,R_j]}G=B$ for every $j$, so that $\limsup_{y\to\infty}\tilde g(y)=B>0$. Moreover $\kappa_{w_0}(\tilde g)(x)=G(x)$ for all $x\geq w_0$. By Lemma \ref{BR lemma 3.4} (with $w=w_0$) there is a sequence $\mathbf{r}=(r_j)\subset(0,1/2]$ such that, writing $\rho^{(2)}_0:=1$, $\rho^{(2)}_k:=r_1\cdots r_k$ and $S_2:=S_{\rho^{(2)}}$, we have
\begin{equation}\label{approx G}
|S_2(x)-G(x)|\leq\log2\,\exp(-x)\qquad\text{for all }x\geq w_0 .
\end{equation}

\emph{Step 4. The combined scales.} Define
\[
\rho_k:=\min\{\rho^{(1)}_k,\rho^{(2)}_k\}\qquad(k\geq0),
\]
so that $\rho_0=C^{-1}<1$. If $\rho_{k-1}=\rho^{(i)}_{k-1}$ with $i\in\{1,2\}$, then $\rho_k\leq\rho^{(i)}_k\leq\rho^{(i)}_{k-1}/2=\rho_{k-1}/2$, because $s_k(a)/s_{k-1}(a)\leq1/\kappa_1<1/2$ and $r_k\leq1/2$. Hence
\begin{equation}\label{ratio half}
\rho_k\leq\rho_{k-1}/2\qquad\text{for all }k\geq1,
\end{equation}
and in particular $\rho$ is strictly decreasing and converges to $0$. Since both $\rho^{(1)}$ and $\rho^{(2)}$ are decreasing, for $\phi\in(0,\rho_0)$ we have $\rho_k\leq\phi$ if and only if $\rho^{(1)}_k\leq\phi$ or $\rho^{(2)}_k\leq\phi$, and therefore $k_\rho(\phi)=\min\{k_{\rho^{(1)}}(\phi),k_{\rho^{(2)}}(\phi)\}$. By \eqref{def s rho},
\begin{equation}\label{min identity}
s_\rho=\min\{s_{\rho^{(1)}},s_{\rho^{(2)}}\}\ \text{ on }(0,\rho_0),
\end{equation}
that is, $S_\rho=\min\{S_1,S_2\}$ for all sufficiently large $x$.

We claim that
\begin{equation}\label{claim window}
\begin{split}
\Phi(\theta):=\limsup_{x\to\infty}\ \inf_{y\in[x,x+\log(1/\theta)]}S_\rho(y)&=h(\theta)\qquad\text{for all }\theta\in(0,1],\\
\liminf_{x\to\infty}S_\rho(x)&=h(0).
\end{split}
\end{equation}
Since $S_\rho\leq S_2\leq G+\log2\,e^{-x}$ by \eqref{min identity} and \eqref{approx G}, property (G2) gives $\Phi(\theta)\leq h(\theta)$. For the reverse inequality, fix $q\in\mathbb{Q}\cap(0,1)$ and let $j$ be any of the infinitely many indices with $\theta_j=q$. On the window $[X_j,X_j+\log(1/q)]$ we have $S_1\geq U(q)-1/j\geq h(q)-1/j$ by \eqref{good window} and the hypothesis $h\leq U$, and $S_2\geq G-\log2\,e^{-X_j}\geq h(q)-\log2\,e^{-X_j}$ by (G1) and \eqref{approx G}. Hence
\[
\inf_{y\in[X_j,X_j+\log(1/q)]}S_\rho(y)\geq h(q)-\frac1j-\log2\,e^{-X_j},
\]
and since $X_j\to\infty$ along these indices, $\Phi(q)\geq h(q)$. Now $\Phi$ is non-decreasing in $\theta$, because the windows become shorter as $\theta$ increases. Thus, if $\theta\in(0,1]$ and $q_n\in\mathbb{Q}\cap(0,1)$ increase to $\theta$, then $\Phi(\theta)\geq\Phi(q_n)\geq h(q_n)\to h(\theta)$ by the continuity of $h$ on $(0,1]$. This proves the first part of \eqref{claim window}. For the second part, \eqref{min identity}, \eqref{U window}, (G3) and \eqref{approx G} give
\[
\liminf_{x\to\infty}S_\rho(x)=\min\Bigl\{\liminf_{x\to\infty}S_1(x),\ \liminf_{x\to\infty}S_2(x)\Bigr\}=\min\{U(0),h(0)\}=h(0),
\]
where we used the hypothesis $h(0)\leq U(0)$.

\emph{Step 5. A complementary Cantor set with scales $\rho$.} For $m\geq0$ let
\[
\pi_m:=\prod_{i=1}^{m}\Bigl(1-\frac{1}{(i+1)^2}\Bigr)=\frac{m+2}{2(m+1)}\in\Bigl[\frac12,1\Bigr]\qquad\text{and}\qquad\rho'_m:=\pi_m\rho_m .
\]
By \eqref{ratio half}, $\rho'_m/\rho'_{m-1}\leq\bigl(1-(m+1)^{-2}\bigr)/2<1/2$, so the numbers
\[
\eta_m:=\rho'_{m-1}-2\rho'_m\qquad(m\geq1)
\]
satisfy $0<\eta_m<\rho'_{m-1}$. Moreover $2^m\rho'_m\leq2^m\rho^{(1)}_m=C^{-1}x_{2^m}(a)\to0$, so a telescoping argument gives
\begin{equation}\label{telescoping}
\sum_{m>k}2^{m-k-1}\eta_m=\rho'_k\qquad\text{for all }k\geq0 .
\end{equation}

We now select the gaps. For $p\geq0$ let $\Lambda_p:=\{n\in\mathbb{N} : 2^p\leq n<2^{p+1}\}$, and for $m\geq1$ let $p(m):=\min\{p\geq0 : a_{2^p}\leq\eta_m\}$. By Lemma \ref{consequences H}(a) and the choice of $C$,
\[
\begin{split}
\eta_m<\rho'_{m-1}\leq\rho^{(1)}_{m-1}&=C^{-1}s_{m-1}(a)\leq C^{-1}\kappa_2\,s_m(a)\\
&\leq C^{-1}\kappa_2\frac{\kappa_1}{\kappa_1-2}\,a_{2^m}=a_{2^m}.
\end{split}
\]
Since $a$ is non-increasing, $a_{2^p}\geq a_{2^m}>\eta_m$ for $p\leq m$, hence $p(m)\geq m+1$. If $n\in\Lambda_{p(m)}$, then $a_n\leq a_{2^{p(m)}}\leq\eta_m$, and by Lemma \ref{consequences H}(b) and the minimality of $p(m)$,
\[
a_n\geq a_{2^{p(m)+1}}\geq c_a^{-2}\,a_{2^{p(m)-1}}>c_a^{-2}\,\eta_m .
\]
Thus
\begin{equation}\label{gap sizes}
c_a^{-2}\,\eta_m<a_n\leq\eta_m\qquad\text{for all }n\in\Lambda_{p(m)} .
\end{equation}
For each $p\geq1$, the levels $m$ with $p(m)=p$ satisfy $m\leq p-1$, so
\[
\sum_{m : p(m)=p}2^{m-1}\leq\sum_{m=1}^{p-1}2^{m-1}<2^{p-1}=\frac{\#\Lambda_p}{2}.
\]
Therefore we can choose pairwise disjoint sets $A_m\subset\Lambda_{p(m)}$, $m\geq1$, with $\#A_m=2^{m-1}$, in such a way that at least $2^{p-1}$ indices of each $\Lambda_p$, $p\geq1$, do not belong to $A:=\bigcup_mA_m$. For each $m\geq1$ fix a bijection between $\{1,2\}^{m-1}$ and $A_m$, and for $\sigma\in\{1,2\}^{m-1}$ let $b_\sigma:=a_n$, where $n\in A_m$ corresponds to $\sigma$.

For $\sigma\in\Sigma^*$ let $\ell(\sigma):=\sum_{\tau\succeq\sigma}b_\tau$. If $|\sigma|=k$, then for each $m>k$ there are exactly $2^{m-1-k}$ words $\tau\succeq\sigma$ with $|\tau|=m-1$, and by \eqref{gap sizes}, \eqref{telescoping} and $\pi_k\geq1/2$,
\begin{equation}\label{lengths scheme}
\frac{\rho_k}{2c_a^2}\leq\frac{\rho'_k}{c_a^{2}}\leq\ell(\sigma)\leq\rho'_k\leq\rho_k .
\end{equation}
Note also that $\ell(\sigma)=\ell(\sigma1)+b_\sigma+\ell(\sigma2)$. We define intervals recursively by $I_\varnothing:=[0,\ell(\varnothing)]$ and, if $I_\sigma=[u,u+\ell(\sigma)]$,
\[
I_{\sigma1}:=[u,\,u+\ell(\sigma1)]\qquad\text{and}\qquad I_{\sigma2}:=[u+\ell(\sigma1)+b_\sigma,\,u+\ell(\sigma)],
\]
and we let $J_\sigma:=\bigl(u+\ell(\sigma1),\,u+\ell(\sigma1)+b_\sigma\bigr)$ be the open interval between them.
By \eqref{lengths scheme}, $\{I_\sigma\}$ is a $(\rho,2c_a^2)$-homogeneous binary Cantor scheme. Let $M$ be its limit set. A point of $I_\varnothing$ which does not belong to $M$ lies in some $I_\sigma$ but in none of its two children, so it lies in $J_\sigma$. Conversely $J_\sigma\cap M=\varnothing$. Hence
\[
I_\varnothing\setminus M=\bigcup_{\sigma\in\Sigma^*}J_\sigma,
\]
a disjoint union of open intervals whose lengths are the numbers $a_n$, $n\in A$, each of them used exactly once. Furthermore $\ell(\varnothing)=\sum_{n\in A}a_n<1$. By Lemma \ref{dimension scheme} and \eqref{claim window},
\begin{equation}\label{dim M}
\uda M=h(\theta)\ \text{ for all }\theta\in(0,1]\qquad\text{and}\qquad\dimh M=h(0).
\end{equation}

\emph{Step 6. The countable part and conclusion.} Let $a'=\{a'_i\}$ be the non-increasing rearrangement of $\{a_n : n\notin A\}$, which is an infinite sequence with $\sum_ia'_i=1-\ell(\varnothing)$. Define
\[
y_k:=\ell(\varnothing)+\sum_{i\geq k}a'_i\quad(k\geq1),\qquad D':=\{y_k : k\geq1\}\cup\{\ell(\varnothing)\},
\]
and $E:=M\cup D'$.
Then $y_1=1$, $y_k$ decreases to $\ell(\varnothing)$, and $E$ is a closed subset of $[0,1]$ such that
\[
[0,1]\setminus E=\bigcup_{\sigma\in\Sigma^*}J_\sigma\ \cup\ \bigcup_{k\geq1}(y_{k+1},y_k),
\]
which is a disjoint union of open intervals whose lengths are exactly the terms of $a$. Thus $E\in\mathcal{C}_a$. The set $D'$ is a translate of $D_{a'}$, so by Lemma \ref{Maxmin}, $\uda D'=\varphi_{B'}(\theta)$ with $B':=\ubd D'\leq\ubd E=B$. Since $\varphi_B(\theta)$ is non-decreasing in $B$, \eqref{h positive} gives $\uda D'\leq\varphi_B(\theta)\leq h(\theta)$. By the finite stability of the upper intermediate dimensions (see \cite{FFK}) and \eqref{dim M}, for every $\theta\in(0,1]$,
\[
\uda E=\max\{\uda M,\ \uda D'\}=\max\{h(\theta),\varphi_{B'}(\theta)\}=h(\theta).
\]
Finally, for $\theta=0$, $\dimh E=\max\{\dimh M,\dimh D'\}=\max\{h(0),0\}=h(0)$. This completes the proof.
\end{proof}

As a consequence, we obtain a complete description of the family $\mathcal{F}^U(a)=\{\theta\mapsto\uda E : E\in\mathcal{C}_a\}$.

\begin{cor}\label{characterization}
Let $a=\{a_n\}$ be a positive, non-increasing sequence with $\sum_na_n=1$ satisfying \eqref{hyp H}. Then $\mathcal{F}^U(a)$ is exactly the set of functions $h\in\mathcal{H}(0,1)$ such that
\[
h(1)=\ubd C_a\qquad\text{and}\qquad h(\theta)\leq\uda C_a\quad\text{for all }\theta\in[0,1].
\]
Moreover, every $h\in\mathcal{F}^U(a)$ satisfies $\uda D_a\leq h(\theta)\leq\uda C_a$ for all $\theta\in[0,1]$, and if $h(\theta_0)=\overline{\dim}_{\,\theta_0}D_a$ for some $\theta_0\in(0,1]$, then $h(\theta)=\uda D_a$ for all $\theta\in[\theta_0,1]$.
\end{cor}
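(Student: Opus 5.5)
The characterization follows by combining the necessary conditions already recorded with Theorem~\ref{main functions} for sufficiency. We prove the two inclusions separately and then read off the remaining assertions from Proposition~\ref{lower bound rigidity}.

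\emph{Necessity.} Let $E\in\mathcal{C}_a$ and $h(\theta):=\uda E$, with the convention $\overline{\dim}_{\,0}E=\dimh E$.
\begin{itemize}
\item[(a)] By Theorem~\ref{BanajiRutar}, $h\in\mathcal{H}(\dim_L E,\dim_A E)$, and Remark~\ref{inclusion H} gives $h\in\mathcal{H}(0,1)$.
\item[(b)] By Falconer's result recalled in Section~\ref{Comp sets}, all sets in $\mathcal{C}_a$ share the same upper box dimension, so $h(1)=\ubd E=\ubd C_a$.
\item[(c)] Hypothesis \eqref{hyp H} includes $\kappa_2<\infty$, so Lemma~\ref{Maxmin} applies and gives $h(\theta)\leq\uda C_a$ for all $\theta\in(0,1]$.
\item[(d)] At $\theta=0$, the same bound $h(0)\leq\dimh C_a$ is Proposition~\ref{hausdorff cantor}.
\end{itemize}

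\emph{Sufficiency.} Let $h\in\mathcal{H}(0,1)$ satisfy $h(1)=\ubd C_a$ and $h\leq\uda C_a$ on $[0,1]$.
\begin{itemize}
\item[(a)] Since $0<B<1$ by Lemma~\ref{consequences H}(c), $h$ is not constant on a degenerate class. We simply take $\lambda=0<\alpha=1$, so $h\in\mathcal{H}(\lambda,\alpha)$ with $\lambda<\alpha$.
\item[(b)] Theorem~\ref{main functions} then produces $E\in\mathcal{C}_a$ with $\uda E=h(\theta)$ for all $\theta\in[0,1]$, so $h\in\mathcal{F}^U(a)$.
\end{itemize}
The only point to check is that Theorem~\ref{main functions} covers $\theta=0$ with the convention $\overline{\dim}_{\,0}=\dimh$. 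Step~6 of its proof indeed gives $\dimh E=h(0)$, so this holds.

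\emph{Remaining assertions.} The upper bound $h\leq\uda C_a$ is part of the characterization just proved. For the lower bound, apply Proposition~\ref{lower bound rigidity}(i) to $h\in\mathcal{H}(0,1)$ with $h(1)=B\in(0,1)$. This gives $h\geq\varphi_B$, and $\varphi_B(\theta)=\uda D_a$ by Theorem~\ref{dimDa}, since $\ubd D_a=B$. At $\theta=0$ the bound is trivial, because $\dimh D_a=0$. The rigidity statement is exactly Proposition~\ref{lower bound rigidity}(ii), with $\varphi_B=\uda D_a$.

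There is no real obstacle: all the difficulty is contained in Theorem~\ref{main functions}. The only care needed is the bookkeeping at $\theta=0$ and the verification that $B\in(0,1)$, which is supplied by Lemma~\ref{consequences H}(c).
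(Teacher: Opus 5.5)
Your proposal is correct and follows the paper's proof essentially step for step. Necessity goes through Theorem~\ref{BanajiRutar} with Remark~\ref{inclusion H}, the invariance of the upper box dimension on $\mathcal{C}_a$, Lemma~\ref{Maxmin}, and Proposition~\ref{hausdorff cantor} at $\theta=0$; sufficiency is Theorem~\ref{main functions} with $\lambda=0$, $\alpha=1$; and the bounds and rigidity come from Lemma~\ref{consequences H}(c) and Proposition~\ref{lower bound rigidity}. Where the paper phrases its appeal to Lemma~\ref{Maxmin} via $(s_n/s_{n+1})^{1/n}\to1$, you invoke $\kappa_2<\infty$, which is the lemma's stated hypothesis as recorded here.
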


\begin{proof}
The inclusion $\supseteq$ is Theorem \ref{main functions} with $\lambda=0$ and $\alpha=1$. Conversely, let $E\in\mathcal{C}_a$ and $h(\theta)=\uda E$. By Theorem \ref{BanajiRutar} and Remark \ref{inclusion H}, $h\in\mathcal{H}(\dim_L E,\dim_A E)\subseteq\mathcal{H}(0,1)$. Moreover $h(1)=\ubd E=\ubd C_a$. Since \eqref{hyp H} implies $\lim_{n\to\infty}(s_n/s_{n+1})^{1/n}=1$, Lemma \ref{Maxmin} gives $h(\theta)\leq\uda C_a$ for $\theta\in(0,1]$, and for $\theta=0$ this inequality is $\dimh E\leq\dimh C_a$, which holds by Proposition \ref{hausdorff cantor}. The last statement follows from Lemma \ref{consequences H}(c) and Proposition \ref{lower bound rigidity}, since $\uda D_a=\varphi_B(\theta)$ with $B=\ubd C_a\in(0,1)$.
\end{proof}

In particular, the functions $h_s$, $s>1$, of Remark \ref{rmk rigidity} form only a small part of $\mathcal{F}^U(a)$. We conclude with two remarks on the construction.

\begin{rmk}\label{rmk minimum}
The minimum taken in Step 4 of the proof of Theorem \ref{main functions} is useful because, in general, the hypothesis $h\leq\uda C_a$ does not provide the pointwise lower bounds on $S_1$ needed in the construction. Indeed, let $b$ be the non-increasing rearrangement of a subfamily of the terms of $a$. Then $b_n\leq a_n$ for every $n$, hence $s_k(b)\leq s_k(a)$ for every $k$, and therefore $s_{\rho(b)}\leq s_{\rho(a)}$ pointwise, where $\rho(b)_k=s_k(b)$ and $\rho(a)_k=s_k(a)$. Thus, the function governing the covering numbers of a decreasing Cantor set $C_b$ built from such a subfamily is dominated, at every scale, by the corresponding function for $C_a$. By contrast, the hypothesis $h\leq\uda C_a$ only gives a comparison of the limsup of window infima. The minimum in Step 4 allows the construction to retain the required lower bounds from $S_1$ on the selected windows while also enforcing the desired upper bounds through the second scale function.
\end{rmk}

\begin{rmk}\label{rmk why schemes}
The decreasing Cantor sets used in the proof of Theorem \ref{attainablevaluesangelini} are not sufficient to realize every function in $\mathcal{F}^U(a)$. Let $b$ be a non-increasing summable sequence with
\[
2<\inf_n \frac{s_n(b)}{s_{n+1}(b)}\leq\sup_n\frac{s_n(b)}{s_{n+1}(b)}<\infty,
\]
which is the property of the sequence $b$ used in that proof. Arguing as in Lemma \ref{consequences H}(c) we get $0<\dim_L C_b\leq\dim_A C_b<1$. By Theorem \ref{BanajiRutar} and Remark \ref{inclusion H}, the function $g(\theta)=\uda C_b$ is either constant or satisfies $D^+g(\theta)<g(\theta)(1-g(\theta))/\theta$ at every $\theta\in(0,1)$.

Now let $a_{2^k+i}=3^{-(k+1)}$ for $k\geq0$ and $0\leq i<2^k$. Then $\sum_na_n=1$, $s_n(a)=3^{-n}$, so $a$ satisfies \eqref{hyp H}, and $C_a$ is the middle third Cantor set. Since its Hausdorff and box dimensions coincide, $\uda C_a=B=\log2/\log3$ for all $\theta\in[0,1]$. Fix $K>B/(1-B)$, let $\vartheta_2:=B/(K(1-B))\in(0,1)$, choose $\vartheta_1\in(0,\vartheta_2)$ and define
\[
h(\theta):=
\begin{cases}
K\vartheta_1/(1+K\vartheta_1) & \text{if }\theta\in[0,\vartheta_1],\\
K\theta/(1+K\theta) & \text{if }\theta\in[\vartheta_1,\vartheta_2],\\
B & \text{if }\theta\in[\vartheta_2,1].
\end{cases}
\]
This function is continuous and non-decreasing, $h(1)=B$, $h\leq\uda C_a$, and $D^+h(\theta)=h(\theta)(1-h(\theta))/\theta$ for $\theta\in[\vartheta_1,\vartheta_2)$, while $D^+h=0$ elsewhere in $(0,1)$. Hence $h\in\mathcal{F}^U(a)$ by Corollary \ref{characterization}. Moreover $Q_h>B/(1-B)$ on $(0,1)$, so $h>\varphi_B$ on $(0,1)$. Suppose that $h(\theta)=\overline{\dim}_{\,\theta}(C_b\cup D')$ for a set $C_b\cup D'\in\mathcal{C}_a$ as in the proof of Theorem \ref{attainablevaluesangelini}, where $b$ is as above and $D'$ is a translate of $D_{a'}$ for some non-increasing sequence $a'$. By finite stability and Lemma \ref{Maxmin}, $h=\max\{\uda C_b,\varphi_{B'}\}$ with $B'=\ubd D'\leq B$. Since $h>\varphi_B\geq\varphi_{B'}$ on $(\vartheta_1,\vartheta_2)$, we get $\uda C_b=h(\theta)$ on this open interval. But $h$ is strictly increasing and satisfies $D^+h(\theta)=h(\theta)(1-h(\theta))/\theta$ on $(\vartheta_1,\vartheta_2)$, which is impossible by the previous paragraph. The sets $M$ constructed in Step 5 avoid this obstruction, since the ratios $\rho_{k}/\rho_{k-1}$ may approach $1/2$ or $0$, and the gaps are not required to be arranged in decreasing order.
\end{rmk}

\section*{Further Work}

In \cite{Banaji-R22}, the authors characterized the functions which arise as the intermediate dimensions of bounded subsets of $\mathbb{R}^n$ (see Theorem \ref{BanajiRutar}). Theorem \ref{main functions} and Corollary \ref{characterization} provide the analogous characterization for the upper intermediate dimensions of complementary sets, under hypothesis \eqref{hyp H}. The corresponding problem for the lower intermediate dimensions remains open.

Let us briefly describe what is known in that case. Since $\da C_a=\lbd C_a$ for all $\theta$, every function $h(\theta)=\da E$ with $E\in\mathcal{C}_a$ satisfies $h(\theta)\leq h(1)=\lbd C_a$. By Theorem \ref{BanajiRutar} and Remark \ref{inclusion H}, $h\in\mathcal{H}(0,1)$, and if $0<\lbd C_a<1$, Proposition \ref{lower bound rigidity} applies to $h$ with $B$ replaced by $\lbd C_a$, so that $\da D_a\leq h(\theta)$ with the same rigidity property. The main difficulty is that the lower intermediate dimensions are not finitely stable, so the gluing argument of Step 6 in the proof of Theorem \ref{main functions} does not control the lower intermediate dimensions of the union. The following questions arise naturally.

\begin{question}
Let $a=\{a_n\}$ be a non-increasing summable sequence satisfying
\[
2<\inf_n\frac{s_n(a)}{s_{n+1}(a)}\leq \sup_n\frac{s_n(a)}{s_{n+1}(a)}<\infty .
\]
Is it true that
\[
\mathcal{F}^L(a):=\bigl\{\,\theta\mapsto \da E : E\in\mathcal{C}_a\,\bigr\}=\bigl\{h\in\mathcal{H}(0,1) : h(1)=\lbd C_a\bigr\}?
\]
More generally, which pairs of functions $(h_1,h_2)$ arise as $(\da E,\uda E)$ for some $E\in\mathcal{C}_a$?
\end{question}

\addtocontents{toc}{\protect\setcounter{tocdepth}{0}}
\section*{Funding}
This work was supported by FIS-2023-02725, singular structures in the
geometry of measures: decompositions, rigidity and rectifiability, CUP
E53C25001800001. The author was additionally partially supported by
grants PICT 2022-4875 (ANPCyT), PIP 202287/22 (CONICET), and PROICO 3-0720
``An\'alisis Real y Funcional. Ec. Diferenciales''.
\addtocontents{toc}{\protect\setcounter{tocdepth}{2}}

\end{document}